\theoremstyle{plain}
\newtheorem{theorem}{Theorem}[section]
\newtheorem{lemma}[theorem]{Lemma}
\newtheorem{proposition}[theorem]{Proposition}
\theoremstyle{definition}
\theoremstyle{remark}
\newtheorem{remark}[theorem]{Remark}
\newtheorem{example}[theorem]{Example}
\def\ps@pprintTitle{%
 \let\@oddhead\@empty
 \let\@evenhead\@empty
 \def\@oddfoot{}%
 \def\@evenfoot{}}
\begin{document}

\begin{frontmatter}

\title{Monotonicity of the period function for planar Hamiltonian vector fields:\\
\large A Generalization of Chicone's Criterion}

\author[1]{F.J. S. Nascimento\corref{cor1}}
\ead{francisco.jsn@univasf.edu.br}
\cortext[cor1]{Corresponding author}
\address[1]{Universidade Federal do Vale do S\~ao Francisco, Colegiado de Geologia, Campus Senhor do Bonfim, BA, Brazil}

\begin{abstract}
This paper investigates the monotonicity of the period function associated with planar Hamiltonian systems of the form \(H(x,y) = F(x) + G(y)\). 
We establish sufficient conditions ensuring the monotonicity of the period function corresponding to a nondegenerate center, expressed explicitly in terms of the functions \(F\) and \(G\).
Our approach extends Chicone’s classical criterion, originally formulated for systems of the type \(x' = y,\ y' = -F'(x)\), to a broader Hamiltonian framework.
As a main application, we analyze the monotonicity of the period function associated with the center at \((0,0)\) of the polynomial Hamiltonian system
\[
H(x,y) = \frac{1}{2}x^2 + \frac{a}{3}x^3 + \frac{b}{4}x^4 + \frac{1}{2}y^2 + \frac{c}{4}y^4,
\]
as a function of the parameters \(a, b, c \in \mathbb{R}\).
\end{abstract}

\begin{keyword}
Period function \sep Monotonicity \sep Planar Hamiltonian systems \sep Separable Hamiltonian systems \sep Chicone's criterion
\end{keyword}

\end{frontmatter}



\section{Introduction and Main Results}
We consider classical planar Hamiltonian systems with a Hamiltonian function of the form
\begin{equation}
H(x, y) = F(x) + G(y),
\end{equation}
where \( F(x) \) and \( G(y) \) are smooth functions, and \( G \) satisfies \( G(-y) = G(y) \), that is, \( G \) is even. 
Assume that \( F(0) = G(0) = F'(0) = G'(0) = 0 \), \( F''(0) > 0 \), and \( G''(0) > 0 \).
Under these conditions, the Hamiltonian system
\begin{equation}\label{eq2}
\begin{cases}
x' = G'(y), \\ 
y' = -F'(x),
\end{cases}
\end{equation}
has a nondegenerate center at \( (0, 0) \), and its trajectories lie on the level curves \( H(x, y) = E \).

Let \( \mathcal{P} \) denote the period annulus of the center, that is, the largest neighborhood of \( (0, 0) \) entirely filled with periodic orbits.
It can be shown (see~\cite{cima2000period}, for instance) that \( H(\mathcal{P}) = [0, E^*) \), where \( E^* \in (0, +\infty] \).
If \( E^* \) is finite, then the boundary of the period annulus is contained in the energy level \( H(x, y) = E^* \).
Moreover, the set of all periodic orbits in the period annulus can be parametrized by the energy, which allows us to define the period function \( T: (0, E^*) \to (0, +\infty) \), assigning to each periodic orbit \( \gamma_E \subset \mathcal{P} \) its minimal period \( T(E) \).
When the origin is a nondegenerate center, it can be shown that \( T(E) \) is a smooth function satisfying \( T(0) > 0 \).

Let \( 0 < E_0 < E^* \).  
We say that \( T \) is \emph{monotonically increasing} on \( (0, E_0) \) if \( T'(E) \geq 0 \) for all \( E \in (0, E_0) \), and \emph{monotonically decreasing} on \( (0, E_0) \) if \( T'(E) \leq 0 \) for all \( E \in (0, E_0) \).
If \( T'(E) = 0 \) on \( (0, E^*) \), then \( T \) is constant on \( (0, E^*) \), and in this case, \( \mathcal{P} \) is said to be an \emph{isochronous center}.
In this article, we investigate the monotonicity of the period function \( T(E) \) associated with the Hamiltonian system~\eqref{eq2}.
Before stating the main results, we introduce some notation.

Let \( \gamma_E \) be a periodic orbit contained in \( \mathcal{P} \), corresponding to the level set \( H = E \).
This orbit intersects the axis \( y = 0 \) (resp. \( x = 0 \)) at the points determined by \( F(x_E(t)) = E \) (resp. \( G(y_E(t)) = E \)).
Since \( F \) has a minimum at \( x = 0 \), near the origin the equation \( F(x) = E \) has two solutions: one for \( x > 0 \), denoted by \( x_+ = x_+(E) \), and one for \( x < 0 \), denoted by \( x_- = x_-(E) \) (similarly, \( y_+ = y_+(E) \) for \( y > 0 \) and \( y_- = y_-(E) \) for \( y < 0 \)) (see Fig.~\ref{Fig1}).

\begin{figure}[hbt!]
    \centering
    \includegraphics[width=0.5\linewidth]{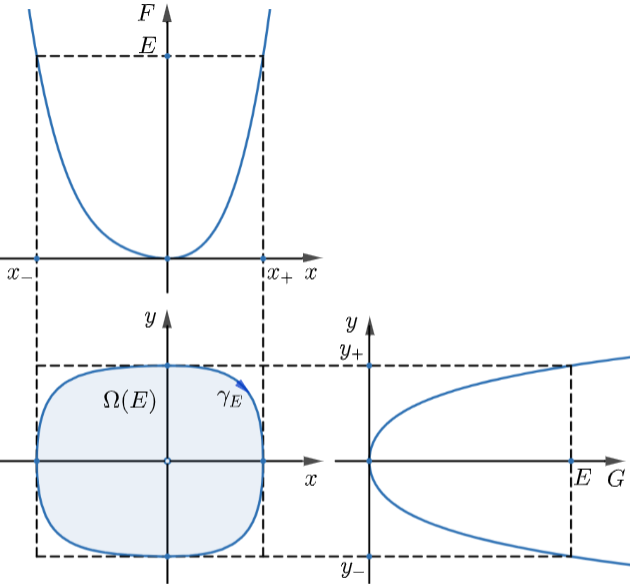}
    \caption{Periodic orbit \( \gamma_E \) of energy \( E \) and the region \( \Omega(E) \).}
    \label{Fig1}
\end{figure}

To state our main result, define \( \Omega(E) \) and \( M \) by
\[
\Omega(E) = \{(x, y) \in \mathbb{R}^2 : 0 \leq H(x, y) \leq E\},
\]
and
\begin{equation*}\label{eq:N}
\begin{split}
M(x, y) = {} & \big[6F(x)(F''(x))^2 - 3(F'(x))^2F''(x) - 2F(x)F'(x)F'''(x)\big] G(y)\,(G'(y))^2 \\
& + F(x)\,(F'(x))^2F''(x)\,\big[(G'(y))^2 - 2G(y)G''(y)\big].
\end{split}
\end{equation*}

Our main result is stated in the following theorem.

\begin{theorem}\label{th1}
Let \( 0 < E_0 \leq E^* \).  
If \( M(x, y) \geq 0 \) for all \( (x, y) \in \Omega(E_0) \), then \( T \) is monotonically increasing on \( (0, E_0) \).  
If \( M(x, y) \leq 0 \) for all \( (x, y) \in \Omega(E_0) \), then \( T \) is monotonically decreasing on \( (0, E_0) \).
\end{theorem}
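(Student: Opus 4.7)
The plan is to recast $T(E)$ as a one-variable integral suited to direct differentiation and then lift the resulting expression to a double integral over $\Omega(E)$. First, since $G$ is even the orbit $\gamma_E$ is symmetric about the $x$-axis, so $T(E) = 2\int_{x_-(E)}^{x_+(E)} dx/G'(y_+(x,E))$ where $y_+(x,E)>0$ solves $G(y)=E-F(x)$. Splitting the integral at $x=0$ and substituting $u = F(x)$ on each of the two monotone branches, together with $v = G(y_+)$, casts $T$ as a Volterra convolution
\[
T(E) = \int_0^E \Phi(u)\,\Psi(E-u)\,du,\quad \Phi(u) = \frac{1}{F'(x_+(u))}-\frac{1}{F'(x_-(u))},\quad \Psi(v)=\frac{2}{G'(\eta(v))},
\]
where $\eta=G^{-1}|_{y\ge 0}$. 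The non-degeneracy $F''(0),G''(0)>0$ at the center forces factorisations $\Phi(u) = \bar\phi(u)/\sqrt u$ and $\Psi(v)=\bar\psi(v)/\sqrt v$ with $\bar\phi,\bar\psi$ smooth up to the origin, and the substitution $u=E\cos^2\theta$ removes the square-root singularities:
\[
T(E) = 2\int_0^{\pi/2}\bar\phi(E\cos^2\theta)\,\bar\psi(E\sin^2\theta)\,d\theta.
\]

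Next, differentiating under the integral and undoing the $\theta$-substitution, $T'(E)$ reduces to a line integral of $[u\,\bar\phi'(u)\bar\psi(v)+v\,\bar\phi(u)\bar\psi'(v)]/\sqrt{uv}$ on the segment $\{u+v=E,\ u,v\ge 0\}$. A symmetrised integration by parts (needed to neutralise the boundary singularities at $u=0$ and $v=0$) promotes this to a double integral over the triangle $\{u,v\ge 0,\ u+v\le E\}$. Pulling back via $(u,v)=(F(x),G(y))$, each point of $\Omega(E)$ is covered four times (one for each sign choice of $(x,y)$), which collapses the sum over quadrants into an integral over $\Omega(E)$. The key algebraic identities that reshape the integrand into $M(x,y)$ are
\[
\left[\frac{F}{(F')^2}\right]''(x)\,(F'(x))^4 = 6F(F'')^2 - 3(F')^2F'' - 2FF'F'''
\]
(the classical Chicone quantity for $F$) and
\[
\left[\frac{G}{(G')^2}\right]'(y)\,(G'(y))^3 = (G'(y))^2 - 2G(y)G''(y).
\]
After recombination, one arrives at a representation of the form
\[
T'(E) = \iint_{\Omega(E)} M(x,y)\,K(x,y;E)\,dx\,dy,\qquad K(x,y;E)\ge 0,
\]
valid for every $E\in(0,E^*)$.

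From this identity the theorem is immediate: if $M\ge 0$ on $\Omega(E_0)$ then $T'(E)\ge 0$ on $(0,E_0)$, and similarly for the opposite sign. The main obstacle is the middle step, namely the symmetrised integration by parts and the bookkeeping needed to produce the representation: both the boundary and pointwise contributions exhibit singularities on the axes $\{F'=0\}\cup\{G'=0\}$, and the required cancellations rely on the evenness of $G$ and on the opposite signs of $F'(x_+)$ and $F'(x_-)$, which force the sums over the four quadrants to combine into a smooth integrand. A secondary subtlety is the pointwise non-negativity of the kernel $K$; this should ultimately reduce to the positivity of $F$, $G$, $(F')^2$ and $(G')^2$ on $\Omega(E)\setminus\{F'G'=0\}$ together with a positive trigonometric weight arising from the $\theta$-integration.
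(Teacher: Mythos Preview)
Your opening steps---writing $T(E)$ as a convolution in the energy variables $u=F(x)$, $v=G(y)$, factoring out the square-root singularities via $\bar\phi,\bar\psi$, and passing to the trigonometric parametrisation $u=E\cos^2\theta$---are correct and essentially identical to what the paper does (there with the auxiliary functions $h(x)=\operatorname{sgn}(x)\sqrt{F(x)}$ and $g,f$ built from the evenness of $G$). Up to this point the two arguments are the same.

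The divergence, and the gap, is the next move. You assert that a ``symmetrised integration by parts'' turns the one-dimensional integral along $\{u+v=E\}$ into a double integral over the triangle $\{u,v\ge 0,\ u+v\le E\}$, and that after pulling back by $(u,v)=(F,G)$ the integrand becomes $M(x,y)$ times a nonnegative kernel. None of this is carried out, and it is not clear how it \emph{could} be: integration by parts in one variable does not by itself raise the dimension of an integral. A fundamental-theorem-of-calculus expansion such as $f(u,E-u)=f(u,0)+\int_0^{E-u}\partial_v f\,dv$ does produce a triangle integral, but leaves a residual one-dimensional piece $\int_0^E f(u,0)\,du$ that you would have to show vanishes (it does not, in general). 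You acknowledge that this step is ``the main obstacle'' and that the cancellations are delicate; in the present write-up it is simply missing. The two algebraic identities you quote for $[F/(F')^2]''$ and $[G/(G')^2]'$ are correct and are indeed the pieces of $M$, but nothing in your outline explains why the putative 2D integrand assembles into exactly $M\cdot K$ with $K\ge 0$.

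The paper avoids all of this by never leaving the line integral. After differentiating the $\theta$-integral, it performs a \emph{single} integration by parts in $\theta$ on the term carrying $h''$ (the one with a $\sin\theta$ factor) so that every surviving term acquires a common $\cos^2\theta$; the boundary contributions vanish because $\cos(\pm\pi/2)=0$. Then it is pure algebra: substituting $h'=F'/(2h)$, $h''$, $h'''$ in terms of $F,F',F'',F'''$, and $g',g'',f,f'$ in terms of $G,G',G''$, the integrand factors as a strictly positive quantity times $M(x(\theta),y(\theta))$ evaluated on the orbit $\gamma_E$. Hence $T'(E)$ is a line integral of $M$ against a positive weight over $\gamma_E\subset\Omega(E_0)$, and the sign of $M$ on $\Omega(E_0)$ controls the sign of $T'(E)$ directly. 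No area integral is needed or used.

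If you want to repair your argument, the shortest route is to drop the double-integral detour entirely: stay with the $\theta$-integral for $T'(E)$, integrate by parts once in $\theta$ on the $\bar\phi'$-term to trade $\sin\theta$ for $\cos^2\theta$, and then use your two identities (which are exactly the computations hidden in the paper's $h$- and $g$-substitutions) to recognise the numerator as $M$ over a positive denominator.
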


The monotonicity of the period function has been extensively studied in the literature (see, for instance, \cite{Hsu1983, rothe1985periods, schaaf1985class, chow1986monotonicity, chicone1987monotonicity, chicone1988quadratic, chicone1988geometric, rothe1993remarks, chicone1993finiteness, coppel1993period, gasull1997period, cima1999isochronicity, freire2004first, sabatini2004period, gasull2004period, schaaf2006global}). 
This subject plays a fundamental role in the study of bifurcation phenomena \cite{chow1986number, chicone1989bifurcation, chicone1992bifurcations, gasull2008critical} and in questions related to inverse problems \cite{urabe1961potential, urabe1964relations, schaaf2006global, rocha2007realization, ragazzo2012scalar, grotta2025global}.

In the potential case, that is, when \( G(y) = y^2/2 \), the monotonicity of the period function was analyzed in \cite{chow1986monotonicity, chicone1987monotonicity}. 
The particular case \( F' = G' \), under additional assumptions, was studied in \cite{schaaf1985class}, while more general approaches can be found in \cite{freire2004first, Sabatini2006, villadelprat2020period}.

In this potential setting, the main criterion for studying the monotonicity of the period function was established by Chicone (see \cite{chicone1987monotonicity}). 
Chicone proved that the period function \( T(E) \) is monotonically increasing on \( (0, E_0) \) if \( N(x) \geq 0 \) for all \( x \in K(E_0) \), and monotonically decreasing on \( (0, E_0) \) if \( N(x) \leq 0 \) for all \( x \in K(E_0) \), where
\[
K(E_0) = \{x \in \mathbb{R} : F(x) \leq E_0\} = [x_-(E_0), x_+(E_0)],
\]
and
\[
N(x) = 6F(x)(F''(x))^2 - 3(F'(x))^2F''(x) - 2F(x)F'(x)F'''(x).
\]

Theorem~\ref{th1} provides a natural generalization of Chicone’s criterion to a broader class of Hamiltonian systems of the form \( H(x, y) = F(x) + G(y) \). 
Indeed, if \( G(y) = \tfrac{1}{2}y^2 \), then \( (G')^2 - 2GG'' = 0 \), and consequently \( M(x, y) = N(x)y^4/2 \). 
Since \( y^4/2 > 0 \) for \( y \neq 0 \), the sign of \( M(x, y) \) coincides with the sign of \( N(x) \). 
Thus, Theorem~\ref{th1} reduces to Chicone’s original criterion in the potential case, while providing a unified framework for the more general separable Hamiltonian setting.

\begin{proof}
We begin by outlining the proof strategy. 
To establish Theorem~\ref{th1}, it suffices to prove that 
\(\tfrac{dT}{dE} \geq 0\) (respectively, \(\leq 0\)) on \((0, E_0)\) 
whenever \(M(x, y) \geq 0\) (respectively, \(\leq 0\)) throughout \(\Omega(E_0)\). 
The proof proceeds as follows. First, we express the period function \(T(E)\) as an integral. 
Next, we differentiate under the integral sign to obtain an explicit expression for \(\tfrac{dT}{dE}\). 
Finally, we show that the sign of this derivative is determined by the function \(M(x, y)\).

Since \(G\) is an even function, the phase portrait of system~\eqref{eq2} is symmetric with respect to the \(x\)-axis. 
Hence, the period of the periodic orbit with energy \(E\) is given by the convergent improper Riemann integral
\[
T(E) = 2 \int_{x_0}^{x_1} \frac{dx}{G'(y)} 
     = 2 \int_{x_0}^{x_1} \frac{dx}{G'(G^{-1}(E - F(x)))},
\]
where \(x_0\) and \(x_1\) denote the left and right intersection points of the level curve \(H(x,y) = E\) with the \(x\)-axis.

The evenness of \(G\) implies the existence of a smooth function \(g\) satisfying \(g(0) = 0\) and \(g'(0) > 0\) such that (see~\cite{Whitney1943})
\begin{equation}\label{eq_1}
G(y) = g\!\left(\frac{y^2}{2}\right).
\end{equation}
Differentiating~\eqref{eq_1} yields
\[
G'(y) = y\, g'\!\left(\frac{y^2}{2}\right),
\]
and consequently,
\[
G^{-1}(z) = \pm \sqrt{2\, g^{-1}(z)},
\]
where \(g^{-1}\) denotes the inverse function of \(g\). 
Since \(g^{-1}(0) = 0\), the fundamental theorem of calculus implies
\[
g^{-1}(z) = z\, f^2(z),
\]
where
\begin{equation}\label{eq_2}
f^2(z) := \int_0^1 (g^{-1})'(t z)\, dt > 0.
\end{equation}

Substituting these expressions into the period integral gives
\begin{align*}
T(E) 
&= 2 \int_{x_0}^{x_1} 
   \frac{dx}{G^{-1}(E - F(x)) \, g'\!\big(g^{-1}(E - F(x))\big)} \\[4pt]
&= 2 \int_{x_0}^{x_1} 
   \frac{dx}{\sqrt{2\, g^{-1}(E - F(x))} \, g'\!\big(g^{-1}(E - F(x))\big)} \\[4pt]
&= 2 \int_{x_0}^{x_1} 
   \frac{dx}{\sqrt{2(E - F(x))} \, f(E - F(x)) \, g'\!\big(g^{-1}(E - F(x))\big)}.
\end{align*}

To facilitate the computation of \( \frac{dT}{dE} \), we introduce a change of variables. 
Define the auxiliary function \(h(x)\) by
\[
h(x) = 
\begin{cases} 
x \left(\frac{F(x)}{x^2}\right)^{1/2}, & \text{for } x \neq 0, \\[6pt]
0, & \text{for } x = 0.
\end{cases}
\]
Since \(F\) has a nondegenerate quadratic minimum at the origin, \(h(x)\) is smooth. 
Moreover, one can verify that \(h'(x) > 0\) for all \(x\), so that \(h\) is strictly increasing. 
Setting \(r = h(x)\) and performing the corresponding change of variables in the integral, we obtain
\[
T(E) = 2 \int_{-\sqrt{E}}^{\sqrt{E}} 
\frac{dr}{
\sqrt{2(E - r^2)}\, 
f(E - r^2)\, 
g'\big(g^{-1}(E - r^2)\big)\, 
h'\big(h^{-1}(r)\big)}.
\]

This expression naturally suggests the trigonometric substitution
\[
r = \sqrt{E} \sin\theta, \qquad -\frac{\pi}{2} \leq \theta \leq \frac{\pi}{2}.
\]
After this change of variables, the integral becomes
\begin{align*}
T(E) 
&= 2 \int_{-\pi/2}^{\pi/2} 
\frac{\sqrt{E} \cos\theta \, d\theta}{
\sqrt{2E \cos^2\theta}\, 
f(E \cos^2\theta)\, 
g'\big(g^{-1}(E \cos^2\theta)\big)\, 
h'\big(h^{-1}(\sqrt{E} \sin\theta)\big)} \\[4pt]
&= \frac{2}{\sqrt{2}} 
\int_{-\pi/2}^{\pi/2} 
\frac{d\theta}{
h'\big(h^{-1}(\sqrt{E} \sin\theta)\big)\,
g'\big(g^{-1}(E \cos^2\theta)\big)\,
f(E \cos^2\theta)}.
\end{align*}

With the integral in this simplified form, we can now differentiate \(T(E)\) with respect to \(E\):
\begin{align*}
\frac{dT}{dE} 
&= \frac{2}{\sqrt{2}} 
\int_{-\pi/2}^{\pi/2} 
\frac{d}{dE} 
\left( 
\frac{1}{
h'\big(h^{-1}(\sqrt{E} \sin\theta)\big)\,
g'\big(g^{-1}(E \cos^2\theta)\big)\,
f(E \cos^2\theta)} 
\right) d\theta \\[4pt]
&= -\frac{2}{\sqrt{2}} 
\int_{-\pi/2}^{\pi/2} 
\frac{
\left[
h'\big(h^{-1}(\sqrt{E} \sin\theta)\big)\,
g'\big(g^{-1}(E \cos^2\theta)\big)\,
f(E \cos^2\theta)
\right]'
}{
\left[
h'\big(h^{-1}(\sqrt{E} \sin\theta)\big)\,
g'\big(g^{-1}(E \cos^2\theta)\big)\,
f(E \cos^2\theta)
\right]^2
} \, d\theta.
\end{align*}

Define
\[
A(E) =
h'\!\left(h^{-1}(\sqrt{E} \sin\theta)\right)
\, g'\!\left(g^{-1}(E \cos^2\theta)\right)
\, f(E \cos^2\theta),
\]
so that
\[
\frac{dT}{dE}
= -\frac{2}{\sqrt{2}}
\int_{-\pi/2}^{\pi/2}
\frac{A'(E)}{A^2(E)} \, d\theta.
\]
Differentiating \(A(E)\) with respect to \(E\) yields
\begin{align*}
A'(E)
&=
\Bigg\{
\frac{
h''\!\left(h^{-1}(\sqrt{E} \sin\theta)\right)
\, g'\!\left(g^{-1}(E \cos^2\theta)\right)
\, f(E \cos^2\theta)
\, \sin\theta
}{
2\sqrt{E}\,
h'\!\left(h^{-1}(\sqrt{E} \sin\theta)\right)
}
\\[4pt]
&\quad +
h'\!\left(h^{-1}(\sqrt{E} \sin\theta)\right)
\bigg[
\frac{
g''\!\left(g^{-1}(E \cos^2\theta)\right)
\, f(E \cos^2\theta)
\, \cos^2\theta
}{
g'\!\left(g^{-1}(E \cos^2\theta)\right)
}\\[4pt]
&\quad+
g'\!\left(g^{-1}(E \cos^2\theta)\right)
\, f'(E \cos^2\theta)
\, \cos^2\theta
\bigg]
\Bigg\}.
\end{align*}

Consequently,
\[
\frac{dT}{dE}
= -\frac{2}{\sqrt{2}}
\left(
I_1(E) + I_2(E) + I_3(E)
\right),
\]
where the three integrals are given by
\begin{align*}
I_1(E)
&=
\int_{-\pi/2}^{\pi/2}
\frac{
h''\!\left(h^{-1}(\sqrt{E} \sin\theta)\right)
\, \sin\theta
}{
2\sqrt{E}\,
\big(h'\!\left(h^{-1}(\sqrt{E} \sin\theta)\right)\big)^3
\, g'\!\left(g^{-1}(E \cos^2\theta)\right)
\, f(E \cos^2\theta)
}
\, d\theta, \\[8pt]
I_2(E)
&=
\int_{-\pi/2}^{\pi/2}
\frac{
g''\!\left(g^{-1}(E \cos^2\theta)\right)
\, \cos^2\theta
}{
h'\!\left(h^{-1}(\sqrt{E} \sin\theta)\right)
\,
\big(g'\!\left(g^{-1}(E \cos^2\theta)\right)\big)^3
\, f(E \cos^2\theta)
}
\, d\theta, \\[8pt]
I_3(E)
&=
\int_{-\pi/2}^{\pi/2}
\frac{
f'\!(E \cos^2\theta)
\, \cos^2\theta
}{
h'\!\left(h^{-1}(\sqrt{E} \sin\theta)\right)
\, g'\!\left(g^{-1}(E \cos^2\theta)\right)
\, \big(f(E \cos^2\theta)\big)^2
}
\, d\theta.
\end{align*}

Next, we manipulate \(I_1(E)\) to introduce the factor \(\cos^2\theta\) into the integral.  
Substituting \(x = h^{-1}(\sqrt{E} \sin\theta)\) and \(z = E - F(x)\) into \(I_1(E)\), we have
\[
I_1(E)
= \frac{1}{2\sqrt{E}}
\int_{-\pi/2}^{\pi/2}
\frac{h''(x)\,\sin\theta \, d\theta}
{(h'(x))^3 \, g'\!\big(g^{-1}(z)\big)\, f(z)}.
\]

We now integrate \(I_1(E)\) by parts:
\begin{align*}
I_1(E)
&= -\frac{1}{2\sqrt{E}}
\left[
\frac{h''(x)\,\cos\theta}
{(h'(x))^3 \, g'\!\big(g^{-1}(z)\big)\, f(z)}
\right]_{-\pi/2}^{\pi/2}
\\
&\quad
+ \frac{1}{2\sqrt{E}}
\int_{-\pi/2}^{\pi/2}
\frac{d}{d\theta}
\!\left[
\frac{h''(x)}
{(h'(x))^3 \, g'\!\big(g^{-1}(z)\big)\, f(z)}
\right]
\cos\theta \, d\theta.
\end{align*}
The boundary term vanishes since \(\cos(\pm \pi/2)=0\) and all functions are smooth.  
To compute the derivative with respect to \(\theta\), we note that
\[
\frac{d\square }{d\theta}
= \frac{d\square }{dx}\,\frac{dx}{d\theta}
\quad\text{and}\quad
\frac{dx}{d\theta}
= \frac{\sqrt{E}\cos\theta}{h'(x)}.
\]

Define
\[
B =
\frac{h''(x)}
{(h'(x))^3 \, g'\!\big(g^{-1}(z)\big)\, f(z)},
\]
so that
\[
\frac{dB}{d\theta}
=
\frac{
h''' \frac{dx}{d\theta} (h')^3 g'\!\big(g^{-1}(z)\big) f(z)
- h'' \frac{d}{d\theta}\!\left[(h')^3 g'\!\big(g^{-1}(z)\big) f(z)\right]
}{
\left[(h')^3 g'\!\big(g^{-1}(z)\big) f(z)\right]^2
}.
\]

The derivative in the numerator expands as
\begin{align*}
\frac{d}{d\theta}\!\left[(h')^3 g'\!\big(g^{-1}(z)\big) f(z)\right]
&= 3(h')^2 h'' \frac{dx}{d\theta} g'\!\big(g^{-1}(z)\big) f(z)
\\
&\quad
+ (h')^3
\left[
\frac{g''\!\big(g^{-1}(z)\big)(-F')\,\frac{dx}{d\theta}}{g'\!\big(g^{-1}(z)\big)} f(z)
+ g'\!\big(g^{-1}(z)\big) f'(z)(-F')\,\frac{dx}{d\theta}
\right]
\\[2pt]
&= \frac{dx}{d\theta}{\;}\frac{1}{g'\!\big(g^{-1}(z)\big)}
\Big\{
3(h')^2 h'' \big[g'\!\big(g^{-1}(z)\big)\big]^2 f(z)
- (h')^3 g''\!\big(g^{-1}(z)\big) F' f(z)
\\
&\qquad
- (h')^3 \big[g'\!\big(g^{-1}(z)\big)\big]^2 f'(z) F'
\Big\}.
\end{align*}

Substituting back, we find
\begin{align*}
\frac{dB}{d\theta}
&=
\frac{\frac{dx}{d\theta}}
{(h')^6 \big(g'\!\big(g^{-1}(z)\big)\big)^3 (f(z))^2}
\Big\{
h'''(h')^3 \big[g'\!\big(g^{-1}(z)\big)\big]^2 f(z)
- 3(h'')^2 (h')^2 \big[g'\!\big(g^{-1}(z)\big)\big]^2 f(z)
\\
&\qquad
+ h''(h')^3 g''\!\big(g^{-1}(z)\big) F' f(z)
+ h''(h')^3 \big[g'\!\big(g^{-1}(z)\big)\big]^2 F' f'(z)
\Big\}.
\end{align*}

Therefore,
\begin{align*}
I_1(E)
&= \frac{1}{2}
\int_{-\pi/2}^{\pi/2}
\frac{\cos^2\theta\, d\theta}
{(h')^5 \big(g'\!\big(g^{-1}(z)\big)\big)^3 (f(z))^2}
\Big\{
h''' h' \big[g'\!\big(g^{-1}(z)\big)\big]^2 f(z)
- 3(h'')^2 \big[g'\!\big(g^{-1}(z)\big)\big]^2 f(z)
\\
&\qquad
+ h'' h' g''\!\big(g^{-1}(z)\big) F' f(z)
+ h'' h' \big[g'\!\big(g^{-1}(z)\big)\big]^2 F' f'(z)
\Big\}.
\end{align*}

Next, we rewrite \(I_2(E)\) and \(I_3(E)\) so that all three integrals share a common denominator:
\begin{align*}
I_2(E)
&= \int_{-\pi/2}^{\pi/2}
\frac{g''\!\big(g^{-1}(z)\big)\cos^2\theta}
{h'(x)\,\big(g'\!\big(g^{-1}(z)\big)\big)^3\,f(z)}\, d\theta
\\[3pt]
&= \int_{-\pi/2}^{\pi/2}
\frac{(h')^4\, g''\!\big(g^{-1}(z)\big)\, f(z)\, \cos^2\theta}
{(h')^5\, \big(g'\!\big(g^{-1}(z)\big)\big)^3\, (f(z))^2}\, d\theta,
\\[8pt]
I_3(E)
&= \int_{-\pi/2}^{\pi/2}
\frac{f'(z)\cos^2\theta}
{h'(x)\, g'\!\big(g^{-1}(z)\big)\, (f(z))^2}\, d\theta
\\[3pt]
&= \int_{-\pi/2}^{\pi/2}
\frac{(h')^4\, \big(g'\!\big(g^{-1}(z)\big)\big)^2\, f'(z)\, \cos^2\theta}
{(h')^5\, \big(g'\!\big(g^{-1}(z)\big)\big)^3\, (f(z))^2}\, d\theta.
\end{align*}

Combining all three integrals, we obtain
\begin{align*}
\frac{dT}{dE}
&= -\frac{2}{\sqrt{2}}\big(I_1(E) + I_2(E) + I_3(E)\big)
\\[3pt]
&= -\frac{1}{\sqrt{2}} \int_{-\pi/2}^{\pi/2}
\Big\{
h'''h'\,\big[g'\!\big(g^{-1}(z)\big)\big]^2 f(z)
- 3(h'')^2\,\big[g'\!\big(g^{-1}(z)\big)\big]^2 f(z)
\\
&\quad
+ h''h'\, g''\!\big(g^{-1}(z)\big) F' f(z)
+ h''h'\,\big[g'\!\big(g^{-1}(z)\big)\big]^2 F' f'(z)
\\
&\quad
+ 2(h')^4\, g''\!\big(g^{-1}(z)\big) f(z)
+ 2(h')^4\, \big[g'\!\big(g^{-1}(z)\big)\big]^2 f'(z)\\
&\quad
\Big\}
\frac{\cos^2\theta}
{(h')^5\, \big(g'\!\big(g^{-1}(z)\big)\big)^3\, (f(z))^2}\, d\theta.
\end{align*}

Rearranging the terms for later convenience gives
\begin{align*}
\frac{dT}{dE}
&= \frac{1}{\sqrt{2}} \int_{-\pi/2}^{\pi/2}
\Big\{
3(h'')^2\,\big[g'\!\big(g^{-1}(z)\big)\big]^2 f(z)
- h'''h'\,\big[g'\!\big(g^{-1}(z)\big)\big]^2 f(z)
\\
&\qquad
- 3h''h'\, g''\!\big(g^{-1}(z)\big) F' f(z)
- h''h'\,\big[g'\!\big(g^{-1}(z)\big)\big]^2 F' f'(z)
\\
&\qquad
- 2(h')^4\, g''\!\big(g^{-1}(z)\big) f(z)
- 2(h')^4\, \big[g'\!\big(g^{-1}(z)\big)\big]^2 f'(z)\\
&\qquad
\Big\}
\frac{\cos^2\theta}
{(h')^5\, \big(g'\!\big(g^{-1}(z)\big)\big)^3\, (f(z))^2}\, d\theta.
\end{align*}

Thus, the sign of \( \frac{dT}{dE} \) is determined solely by the expression
\[
K = \big[3(h'')^2 - h'''h'\big]\big(g'(g^{-1}(z))\big)^2 f(z) 
    - \big[h''h'F'(x) + 2(h')^4\big]\Big[\big(g'(g^{-1}(z))\big)^2 f'(z) + g''(g^{-1}(z)) f(z)\Big].
\]

Since \(h^2 = F\), we compute the derivatives:
\[
h' = \frac{F'}{2h}, \qquad 
h'' = \frac{2F''F - (F')^2}{4hF}, \qquad 
h''' = \frac{4F^2F''' - 6FF'F'' + 3(F')^3}{8hF^2}.
\]
These yield the identities
\[
3(h'')^2 - h'''h' = \frac{6F(F'')^2 - 3(F')^2F'' - 2FF'F'''}{8F^2},
\qquad
h''h'F' + 2(h')^4 = \frac{2F(F')^2F''}{8F^2}.
\]

From the definitions of \(g\) and \(f\) (see~\eqref{eq_1}–\eqref{eq_2}), we have
\[
g' = \frac{G'}{y}, 
\qquad 
f^2 = \frac{y^2}{2G}.
\]
Hence,
\[
(g')^2 f^2 = \frac{(G')^2}{2G}, 
\qquad 
(g')^2 f f' = \frac{2Gg' - (G')^2}{4G^2}, 
\qquad 
g'' f^2 = \frac{G'' - g'}{2G}.
\]
Combining these relations gives
\[
(g'(g^{-1}))^2 f f' + g''(g^{-1}) f^2 
  = \frac{2Gg' - (G')^2}{4G^2} + \frac{G'' - g'}{2G} 
  = \frac{2G G'' - (G')^2}{4G^2}.
\]

Now we compute \(K \cdot f\):
\begin{align*}
K \cdot f 
&= \big[3(h'')^2 - h'''h'\big]\big(g'(g^{-1})\big)^2 f^2 
 - \big[h''h'F' + 2(h')^4\big]\big[\big(g'(g^{-1})\big)^2 f f' + g''(g^{-1})f^2\big] \\[4pt]
&= \big[3(h'')^2 - h'''h'\big]\frac{(G')^2}{2G} 
 - \big[h''h'F' + 2(h')^4\big]\frac{2G G'' - (G')^2}{4G^2} \\[4pt]
&= \left[\frac{6F(F'')^2 - 3(F')^2F'' - 2FF'F'''}{8F^2}\right] \frac{G(G')^2}{2G^2} 
 - \left[\frac{2F(F')^2F''}{8F^2}\right]\frac{2G G'' - (G')^2}{4G^2} \\[4pt]
&= \frac{\big(6F(F'')^2 - 3(F')^2F'' - 2FF'F'''\big)G(G')^2 
    - F(F')^2F''\big(2G G'' - (G')^2\big)}{16F^2G^2}.
\end{align*}

Therefore, the sign of \( \frac{dT}{dE} \) depends exclusively on the expression
\[
M= \big(6F(F'')^2 - 3(F')^2F'' - 2FF'F'''\big)G(G')^2 
  + F(F')^2F''\big((G')^2 - 2G G''\big).
\]

Consequently, \( \frac{dT}{dE} \geq 0 \) on \( (0, E_0) \) if \( M(x, y) \geq 0 \) for all \( (x, y) \in \Omega(E_0) \),
and \( \frac{dT}{dE} \leq 0 \) on \( (0, E_0) \) if \( M(x, y) \leq 0 \) for all \( (x, y) \in \Omega(E_0) \).

\medskip
This completes the proof of Theorem~\ref{th1}.

\end{proof}

\section{Applications}

In this section, we illustrate the applicability of Theorem~\ref{th1} through some examples, beginning with a representative case that generalizes many classical families of polynomial Hamiltonian systems.

\begin{example}\label{ex:general}
Consider the Hamiltonian system
\begin{equation}
\begin{cases}
x'_1 = x_2\big(b_1 + b_2 x_2^2\big), \\[4pt]
x'_2 = -x_1\big(a_1 + a_2 x_1 + a_3 x_1^2\big),
\end{cases}
\end{equation}
where \(a_1 > 0\) and \(b_1 > 0\).
By performing the change of variables
\begin{equation}\label{nor}
x = x_1, \qquad y = \sqrt{\frac{b_1}{a_1}}\,x_2, \qquad d\tau = \sqrt{a_1 b_1}\,dt,
\end{equation}
we obtain the normalized system
\begin{equation}\label{sist_1}
\begin{cases}
x' = y\big(1 + c y^2\big), \\[2mm]
y' = -x\big(1 + a x + b x^2\big),
\end{cases}
\end{equation}
where
\[
a = \frac{a_2}{a_1}, \qquad b = \frac{a_3}{a_1}, \qquad c = \frac{a_1 b_2}{b_1^2}.
\]
Next, we study the monotonicity of the period function \(T(E)\) associated with the center at \((0,0)\) of the Hamiltonian system~\eqref{sist_1}, in terms of the parameters \(a\), \(b\), and \(c\).
\end{example}

The Hamiltonian function corresponding to system~\eqref{sist_1} is
\[
H(x,y) = \frac{1}{2}x^2 + \frac{a}{3}x^3 + \frac{b}{4}x^4 
        + \frac{1}{2}y^2 + \frac{c}{4}y^4.
\]
Hence, we identify
\[
F(x) = \frac{1}{2}x^2 + \frac{a}{3}x^3 + \frac{b}{4}x^4,
\qquad
G(y) = \frac{1}{2}y^2 + \frac{c}{4}y^4.
\]

After a straightforward but lengthy computation, we obtain
\[
M(x,y) = \frac{1}{24}x^4y^4\big(A(x)P(y) - B(x)Q(y)\big),
\]
where
\begin{align*}
A(x) &= 10a^2 - 9b + a(30b + 4a^2)x + b(36b + 16a^2)x^2 + 24ab^2x^3 + 9b^3x^4, \\[2pt]
B(x) &= (6 + 4ax + 3bx^2)(1 + ax + bx^2)^2(1 + 2ax + 3bx^2), \\[2pt]
P(y) &= (1 + cy^2)^2(2 + cy^2), \\[2pt]
Q(y) &= c(3 + cy^2).
\end{align*}

This factorization shows that the sign of \(M(x,y)\) is governed by
\[
N(x,y) = A(x)P(y) - B(x)Q(y),
\]
since \(x^4y^4/24 > 0\) for all \((x,y)\neq(0,0)\) in the period annulus.

Let \(\Delta = a^2 - 4b\) and \(x_{1,2} = \frac{-a \pm \sqrt{\Delta}}{2b}\) be the real roots of \(p(x) = 1 + ax + bx^2\), when they exist.  
We now describe the behavior of \(T(E)\) for different parameter configurations.

\begin{theorem}\label{th_2}
Let \(T(E)\) denote the period function associated with the center at \((0,0)\) of system~\eqref{sist_1}. The following statements hold:
\begin{enumerate}[label=(\Alph*)]
\item If \(a = b = c = 0\), then \(T(E)\) is constant for all \(E\).
\item If \(a = b = 0\) and \(c > 0\), then \(T(E)\) is decreasing for all \(E > 0\).
\item Suppose \(a \neq 0\), \(b = 0\), and \(c \ge 0\). Let \(x_0 \neq -1/a\) satisfy \(F(x_0) = F(-1/a) =: E_0\). Then \(T(E)\) is increasing on \((0, E_0)\) for every \(0 < c \le \frac{2A(x_0)}{3B(x_0)}\).
\item Suppose \(b \neq 0\) and \(\Delta \ge 0\). Let \(x_0\) be the real root of \(p(x)\) defining the boundary of the period annulus, and let \(r_0 \neq x_0\) satisfy \(F(r_0) = F(x_0) =: E_0\). Denote by \(r\) the point in the annulus where \(f(x) := \frac{A(x)}{B(x)}\) attains its absolute minimum. Then \(T(E)\) is increasing on \((0, E_0)\) for every \(0 \le c \le \frac{2A(r)}{3B(r)}\).
\item Suppose \(\Delta < 0\), \(a = 0\), and \(c \ge 0\).
  \begin{enumerate}[label=(\roman*)]
  \item If \(c = 0\), then \(T(E)\) is decreasing for all \(0 < E \le E_0 := F(x_0)\), where \(x_0 = \sqrt{(\sqrt{5}-2)/2}\).
  \item If \(c > 0\), let \(x_m > x_0\) be an absolute maximum of \(f(x) = \frac{A(x)}{B(x)}\), and define \(y_m = y_m(c)\) by \(G(y_m) = F(x_m) =: E_0\). Then there exists a unique \(c_0 > 0\) satisfying
  \[
  \frac{A(x_m)}{B(x_m)} = \frac{Q(y_m)}{P(y_m)},
  \]
  such that \(T(E)\) is decreasing on \((0, E_0)\) for every \(c \ge c_0\).
  \end{enumerate}
\item Suppose \(a \neq 0\), \(b > \frac{a^2}{4}\), and \(c = 0\).
  \begin{enumerate}[label=(\roman*)]
  \item If \(\frac{a^2}{4} < b < \frac{10}{9}a^2\), then \(A(x)\) has two real roots of the same sign (\(x_2 < x_1 < 0\) if \(a > 0\), or \(0 < x_1 < x_2\) if \(a < 0\)). In this case, \(T(E)\) is increasing for all \(0 < E \le E_0 := F(x_1)\).
  \item If \(b > \frac{10}{9}a^2\), then \(A(x)\) has two real roots with opposite signs (\(x_2 < 0 < x_1\)). In this case, \(T(E)\) is decreasing for all \(0 < E \le E_0 := F(x_1)\).
  \end{enumerate}
\item Suppose \(a \neq 0\), \(b > \frac{a^2}{4}\), and \(c > 0\).
  \begin{enumerate}[label=(\roman*)]
  \item If \(\frac{a^2}{4} < b \le \frac{a^2}{3}\), then \(A(x)\) has two distinct roots (\(x_2 < x_1 < 0\) if \(a > 0\), or \(0 < x_1 < x_2\) if \(a < 0\)). Let \(x_0 \neq x_1\) satisfy \(F(x_0) = F(x_1) =: E_0\). Then \(T(E)\) is increasing on \((0, E_0)\) for every \(0 < c \le c_0 := \frac{2A(x_0)}{3B(x_0)}\).
  \item If \(b > \frac{a^2}{3}\) and \(c > 0\), let \(x_m\) be a maximum point of \(f(x) = \frac{A(x)}{B(x)}\), and define \(y_m = y_m(c)\) by \(G(y_m) = F(x_m)\). Then there exists \(c_0 > 0\) determined implicitly by
  \[
  \frac{A(x_m)}{B(x_m)} = \frac{Q(y_m)}{P(y_m)},
  \]
  such that \(T(E)\) is decreasing on \((0, E_0)\) for every \(c\geq c_0\).
  \end{enumerate}
\end{enumerate}
\end{theorem}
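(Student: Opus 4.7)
The plan is to apply Theorem~\ref{th1} case by case, exploiting the factorization
\[
M(x,y) = \tfrac{1}{24}\,x^{4}y^{4}\,N(x,y), \qquad N(x,y) := A(x)P(y) - B(x)Q(y),
\]
which reduces every sign question on $\Omega(E_0)\setminus\{(0,0)\}$ to a comparison of the rational functions
\[
f(x) := A(x)/B(x), \qquad g(y) := Q(y)/P(y),
\]
on the projections of the annulus, provided the common factors $B(x)$ and $P(y)$ keep a definite sign.

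I would first clarify the sign of the auxiliary factors. The factor $P(y)$ is positive for $c\geq 0$, and $Q(y) = c(3+cy^{2})\geq 0$. For $B(x) = (6+4ax+3bx^{2})(1+ax+bx^{2})^{2}(1+2ax+3bx^{2})$, the identities $6+4ax+3bx^{2} = 12F(x)/x^{2}$ and $1+ax+bx^{2} = F'(x)/x$ give positivity on the open annulus (the annulus terminates at the first zero of $F'$), while positivity of $F''(x) = 1+2ax+3bx^{2}$ is handled separately in each case via the hypotheses on $a,b$. A one-variable computation with $u = cy^{2}$ shows that $(3+u)/[(1+u)^{2}(2+u)]$ is strictly decreasing on $[0,\infty)$, hence $g$ attains its maximum $g(0) = 3c/2$ at $y=0$ and its minimum on the $y$-projection of $\Omega(E_0)$ at the boundary point $y_m$ characterized by $G(y_m) = E_0$.

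The case analysis then splits naturally. Cases~(A) and~(B) are immediate: in (A) all coefficients vanish so $M\equiv 0$, and in (B) we have $A\equiv 0$, giving $N = -B(x)Q(y) \leq 0$. In the ``increasing'' cases~(C), (D), and~(G)(i), the aim is $\min_{x}f(x) \geq g(0) = 3c/2$; the minimizer of $f$ on the projected annulus is identified explicitly (at $x_0$, the interior critical point $r$, or $x_0$, respectively), and the bounds $c\leq 2A(\cdot)/(3B(\cdot))$ are exactly this inequality. In case~(F), where $c=0$ and $Q\equiv 0$, the sign of $N = 2A(x)$ is read off from $A(0) = 10a^{2}-9b$ together with the positive leading coefficient $9b^{3}$, producing the dichotomy $b\lessgtr \tfrac{10}{9}a^{2}$. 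In the ``decreasing'' cases~(E)(ii) and~(G)(ii), we instead want $\max_{x}f(x) \leq \min_{y}g(y) = g(y_m)$; the threshold $c_0$ is defined implicitly by $f(x_m) = g(y_m)$, and the decreasing regime persists for $c\geq c_0$ by a monotone dependence argument in $c$.

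The main obstacle is computational rather than conceptual. Locating the extrema of $f(x) = A(x)/B(x)$, whose numerator is a quartic and whose denominator is a degree-seven polynomial, requires analysis of a high-degree critical equation; establishing uniqueness of the maximum $x_m$ in~(E)(ii) and~(G)(ii), and pinning down the real-root configuration of $A(x)$ in~(F) across the thresholds $a^{2}/4$ and $10a^{2}/9$, calls for a careful discriminant and sign analysis. Finally, the implicit definition of $c_0$ needs monotone dependence on $c$ to be made rigorous, since $c$ controls simultaneously the function $g$ and the location of the endpoint $y_m$.
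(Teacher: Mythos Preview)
Your proposal is correct and follows essentially the same route as the paper: reduce via Theorem~\ref{th1} and the factorization of $M$ to the sign of $N(x,y)=A(x)P(y)-B(x)Q(y)$, then compare $f(x)=A/B$ against $g(y)=Q/P$ using that $g$ is decreasing with $g(0)=3c/2$, handling the increasing cases (C), (D), (G)(i) by $\min f\ge 3c/2$, the $c=0$ cases (E)(i), (F) by the sign of $A(x)$ alone, and the decreasing cases (E)(ii), (G)(ii) by a tangency/threshold argument for $c_0$. Your identities $6+4ax+3bx^{2}=12F(x)/x^{2}$ and $1+ax+bx^{2}=F'(x)/x$ for the factors of $B(x)$ are a nice touch the paper does not make explicit, and the computational obstacles you flag (root configuration of $A$, uniqueness of $x_m$, monotone dependence of the implicit $c_0$) are exactly the places where the paper's argument is itself sketched rather than fully carried out.
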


\begin{proof}
The proof consists in analyzing the sign of \(N(x,y)\) in each case. 

\subsection*{Case (A): \(a = b = c = 0\)} This case corresponds to a linear isochronous center; hence \(T(E)\) is constant for all \(E\). 
We proceed to analyze the remaining cases.

\subsection*{Case (B): \(a = b = 0\) and \(c > 0\)}

In this case, the origin \((0,0)\) is a global (unbounded) center. We establish the following result.

\begin{lemma}[Case (B)]
If \(a = b = 0\) and \(c > 0\), then \(M(x,y) \leq 0\) for all \((x,y) \in \mathbb{R}^2\).
\end{lemma}

\begin{proof}
When \(a = b = 0\), we have \(A(x) = 0\) and \(B(x) = 6\) for all \(x \in \mathbb{R}\). 
Therefore,
\[
N(x,y) = A(x)P(y) - B(x)Q(y) = -6Q(y) \leq 0,
\]
since \(Q(y) > 0\) for all \(y\) whenever \(c > 0\). 
Hence \(M(x,y) \leq 0\) for all \((x,y) \in \mathbb{R}^2\), and by Theorem~\ref{th1} we conclude that \(T(E)\) is decreasing for all \(E > 0\).
\end{proof}

\subsection*{Case (C): \(a \neq 0\), \(b = 0\), and \(c \geq 0\)}

In this case, the origin \((0,0)\) is a bounded center contained in the region 
\[
\Omega_a(E_0) = \{(x,y) : 0 \leq H(x,y) \leq E_0\},
\]
where \(E_0 = F(x_0)\) and \(x_0 = -\frac{1}{a}\) is the root of \(1 + a x = 0\) (see Fig.~\ref{Fig2a}).

\

\begin{figure}[hbt!]
    \centering
    \begin{subfigure}[b]{0.45\textwidth}
        \centering
        \includegraphics[width=.65\linewidth]{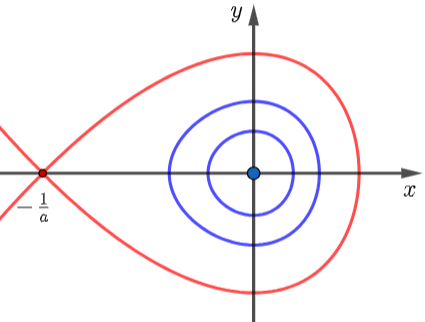}
        \caption{\(\Omega_a(E_0)\) for \(a > 0.\)}
        \label{Fig2a}
    \end{subfigure}
    \hfill
    \begin{subfigure}[b]{0.45\textwidth}
        \centering
        \includegraphics[width=.65\linewidth]{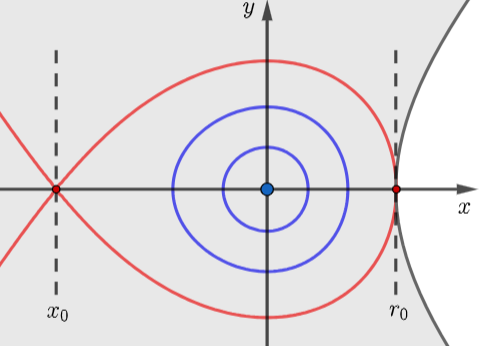}
        \caption{\(M(x,y) \geq 0\) in \(\Omega_a(E_0)\).}
        \label{Fig2b}
    \end{subfigure}
    \caption{Illustration of the bounded center corresponding to Case (C).}
    \label{Fig2}
\end{figure}

We have the following result.

\begin{lemma}[Case (C)]
Suppose \(a \neq 0\), \(b = 0\), and \(c \geq 0\). Let \(r_0 \neq x_0 = -\frac{1}{a}\) be such that \(F(r_0) = F(x_0)\). 
Then \(M(x,y) \geq 0\) in \(\Omega_a(E_0)\) for all \(0 \leq c \leq c_0 = \dfrac{2A(r_0)}{3B(r_0)}\) (see Fig~\ref{Fig2b}).
\end{lemma}

\begin{proof}
In this case,
\[
A(x) = a^2(10 + 4a x)
\quad \text{and} \quad 
B(x) = (6 + 4a x)(1 + a x)^2(1 + 2a x).
\]
Since \(P(y) > 0\) for all \(y\), it follows that
\[
N(x,y) \geq 0 
\;\Leftrightarrow\;
P(y)\!\left(A(x) - B(x)\frac{Q(y)}{P(y)}\right) \geq 0
\;\Leftrightarrow\;
A(x) - B(x)\frac{Q(y)}{P(y)} \geq 0.
\]

Note that the function \(\frac{Q(y)}{P(y)}\) is strictly decreasing and attains its maximum at \(y = 0\). 
Hence,
\[
\frac{Q(y)}{P(y)} \leq \frac{Q(0)}{P(0)} = \frac{3c}{2}.
\]

Define the function \(f(x) = \dfrac{A(x)}{B(x)}\) on the interval \((x_0, r_0]\). 
Since \(f\) attains its global minimum at \(x = r_0\), we have
\[
\frac{A(r_0)}{B(r_0)} \leq \frac{A(x)}{B(x)}, \quad \forall\, x \in (x_0, r_0].
\]
Therefore,
\[
A(x) - B(x)\frac{Q(y)}{P(y)} 
\geq A(x) - \frac{3c}{2}B(x)
\geq A(x) - \frac{3c_0}{2}B(x)
\geq A(x) - \frac{A(r_0)}{B(r_0)}B(x)
\geq 0.
\]

Hence \(N(x,y) \geq 0\) throughout \(\Omega_a(E_0)\) for all \(0 \leq c \leq \frac{2A(r_0)}{3B(r_0)}\). 
By Theorem~\ref{th1}, it follows that \(T(E)\) is increasing for all \(0 < E \leq F(x_0)\).
\end{proof}

\subsection*{Case (D): \(b \neq 0\) and \(\Delta = a^2 - 4b \geq 0\)}

In this case, \(p(x) = 1 + a x + b x^2\) has two real roots, denoted by \(x_1\) and \(x_2\). Consequently, the center at \((0,0)\) is bounded by the homoclinic orbit passing through the point \((x_0,0)\), where \(x_0 \in \{x_1, x_2\}\). 
Without loss of generality, we may choose \(x_0 = x_1\), as illustrated in Figure~\ref{Fig3}.

\begin{figure}[hbt!]
    \centering
    \begin{subfigure}[b]{0.45\textwidth}
        \centering
        \includegraphics[width=.7\linewidth]{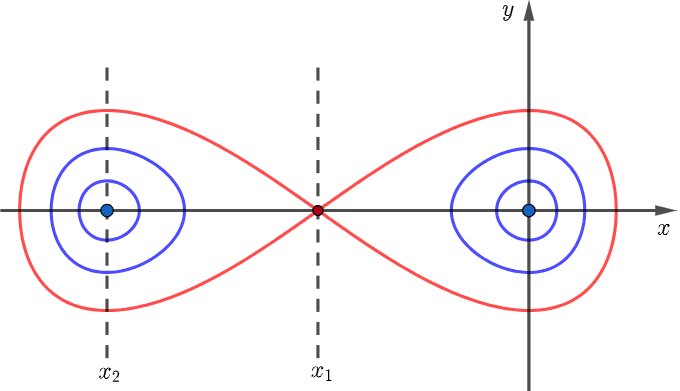}
        \caption{\(x_2 < x_1 < 0\)\, \((a > 0,\, b > 0)\).}
    \end{subfigure}
    \hfill
    \begin{subfigure}[b]{0.45\textwidth}
        \centering
        \includegraphics[width=0.5\linewidth]{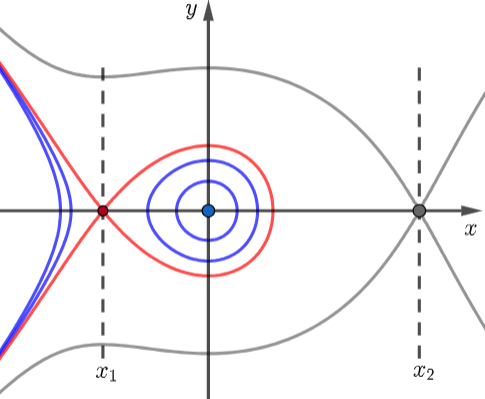}
        \caption{\(x_1 < 0 < x_2\)\, \((a > 0,\, b < 0)\).}
    \end{subfigure}
     
    \vspace{0.5em}
     
    \begin{subfigure}[b]{0.45\textwidth}
        \centering
        \includegraphics[width=.7\linewidth]{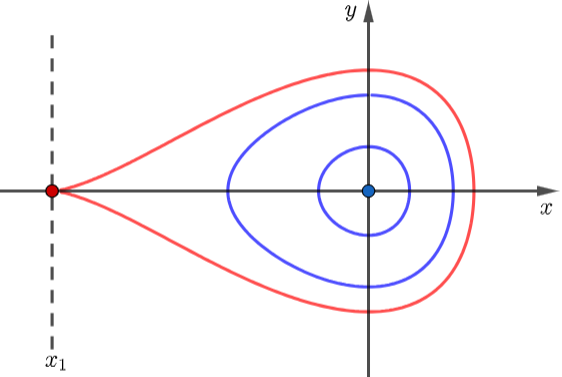}
        \caption{\(x_1 = x_2\)\, \((a > 0,\, b = a^2/4)\).}
    \end{subfigure}
    \hfill
    \begin{subfigure}[b]{0.45\textwidth}
        \centering
        \includegraphics[width=.6\linewidth]{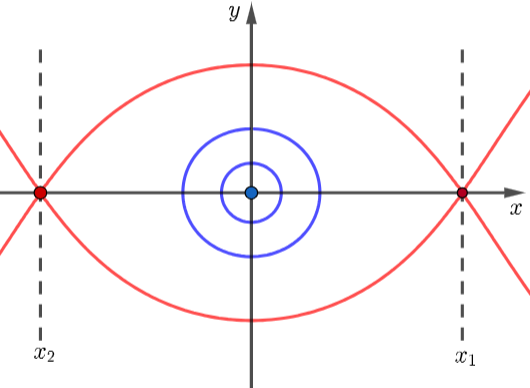}
        \caption{\(x_2 < 0,\, x_1 = -x_2\)\, \((a = 0,\, b < 0)\).}
    \end{subfigure}

    \caption{Illustration of four among the seven possible phase configurations.}
    \label{Fig3}
\end{figure}

Thus, studying the monotonicity of the period function \(T(E)\) associated with the center at \((0,0)\) for \(\Delta \geq 0\) reduces to determining the sign of \(N(x,y)\) in the region
\[
\Omega(E_0) = \{(x,y) \in \mathbb{R}^2 : 0 \leq H(x,y) \leq F(x_0)\},
\]
where \(x_0\) denotes the root of \(1 + a x + b x^2 = 0\) that defines the boundary of the period annulus.

\begin{lemma}[Case (D)]
Suppose \(b \neq 0\) and \(\Delta \geq 0\). Let \(x_0\) be the root of \(1 + a x + b x^2 = 0\) that defines the boundary of the period annulus, and let \(r_0 \neq x_0\) be such that \(F(r_0) = F(x_0)\). 
Then \(N(x,y) \geq 0\) in \(\Omega(E_0)\) for all \(0 \leq c \leq c_0 = \frac{2A(r)}{3B(r)}\), where \(r\) is the point at which the function \(f(x) = \frac{A(x)}{B(x)}\) attains its absolute minimum on the interval determined by the period annulus.
\end{lemma}

\begin{proof}
The argument follows similarly to that of Case~(C). Without loss of generality, assume that \(x_0 < 0\). Since \(P(y) > 0\) for all \(y\), we have
\[
N(x,y) \geq 0 
\;\Leftrightarrow\;
P(y)\!\left(A(x) - B(x)\frac{Q(y)}{P(y)}\right) \geq 0
\;\Leftrightarrow\;
A(x) - B(x)\frac{Q(y)}{P(y)} \geq 0.
\]

Note that the function \(\frac{Q(y)}{P(y)}\) is strictly decreasing and attains its maximum at \(y = 0\). Hence,
\[
\frac{Q(y)}{P(y)} \leq \frac{Q(0)}{P(0)} = \frac{3c}{2}.
\]

Let \(r \in [x_0, r_0]\) denote the point where the function \(\frac{A(x)}{B(x)}\) reaches its absolute minimum. Then,
\[
\frac{A(r)}{B(r)} \leq \frac{A(x)}{B(x)}, \quad \forall\, x \in [x_0, r_0].
\]
Therefore,
\[
A(x) - B(x)\frac{Q(y)}{P(y)} 
\geq A(x) - \frac{3c}{2}B(x)
\geq A(x) - \frac{3c_0}{2}B(x)
= A(x) - \frac{A(r)}{B(r)}B(x)
\geq 0.
\]

Thus, \(N(x,y) \geq 0\) in \(\Omega(E_0)\) for all \(0 \leq c \leq \frac{2A(r)}{3B(r)}\). 
In particular, when \(b > 0\), we have \(r = r_0\); that is, the global minimum of the function \(\frac{A(x)}{B(x)}\) occurs at \(r_0\). 
Consequently, the period function \(T(E)\) is increasing for all \(0 \leq E \leq E_0\).
\end{proof}

\begin{remark}\label{rmk:cases}
The following observations (1)--(5) apply to cases (C) and (D).

\begin{enumerate}
\item Let \(c_1 = \dfrac{2A(0)}{3B(0)}\).  
For each fixed \(c \in (c_0, c_1)\), there exists \(x_c \in (x_0, r_0)\) such that 
\[
N(x,y) \geq 0, \quad \text{for all } (x,y) \text{ satisfying } 0 \leq H(x,y) \leq F(x_c)
\text{ (see Fig.~\ref{Fig4a})}.
\]
In these cases, numerical simulations indicate that the period function \(T(E)\) reaches a maximum at some
\[
F(x_c) < E^* < F(x_0),
\]
and then becomes decreasing on the interval \((E^*, F(x_0))\).

\item For each fixed \(c > c_1\), there exists \(x_c \in (x_0, r_0)\) such that 
\[
N(x,y) \leq 0, \quad \text{for all } (x,y) \text{ satisfying } 0 \leq H(x,y) \leq F(x_c)
\text{ (see Fig.~\ref{Fig4b})}.
\]
Consequently, \(T(E)\) is decreasing on \((0, F(x_c)]\).  
Numerical plots indicate that \(T(E)\) attains a minimum at some 
\[
F(x_c) < E^* < F(x_0),
\]
and becomes increasing on the interval \((E^*, F(x_0))\).

\item In the special case \(a = 0\) and \(b < 0\), one has \(c_0 = c_1\).  
Hence, \(T(E)\) is increasing on \((0, F(x_0)]\) for all \(0 \leq c \leq c_0\),  
and decreasing on \((0, F(x_c)]\) for each fixed \(c > c_0\) (see Fig.~\ref{Fig4c}).

\item When \(a \neq 0\) and \(c = c_0\), the sign of \(N(x,y)\) is not well defined in a neighborhood of \((0,0)\).  
Therefore, no definite conclusion can be drawn regarding the monotonicity of the period function for orbits near the origin (see Fig.~\ref{Fig4d}).

\item In the cases \(b = 0\) (case C) or \(b > 0\), the point \(x_c\) can be determined as the solution of the equation
\[
\sigma(x_c) = 0,
\quad \text{where} \quad
\sigma(x) = 2A(x) - 3cB(x).
\]
\end{enumerate}

\begin{figure}[hbt!]
    \centering
    \begin{subfigure}[b]{0.45\textwidth}
        \centering
        \includegraphics[width=0.7\linewidth]{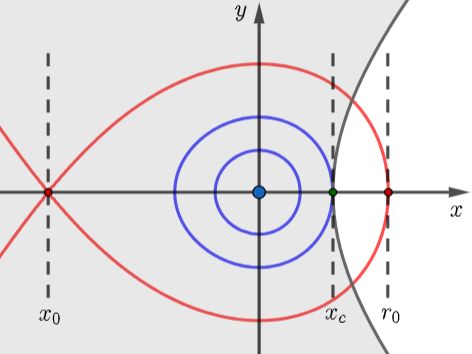}
        \caption{}
        \label{Fig4a}
    \end{subfigure}
    \hfill
    \begin{subfigure}[b]{0.45\textwidth}
        \centering
        \includegraphics[width=0.7\linewidth]{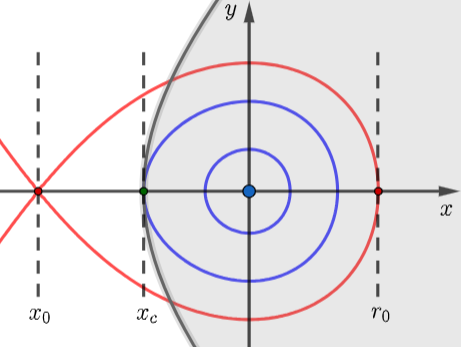}
        \caption{}
        \label{Fig4b}
    \end{subfigure}
   
     \vspace{0.5em}
     
    \begin{subfigure}[b]{0.45\textwidth}
        \centering
        \includegraphics[width=.7\linewidth]{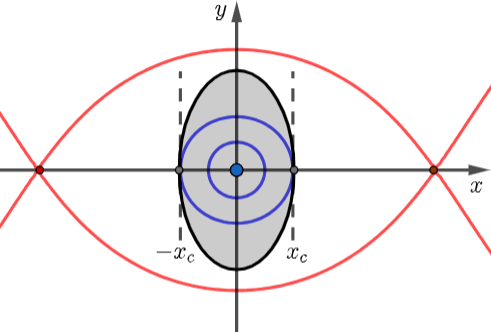}
        \caption{}
        \label{Fig4c}
    \end{subfigure}
    \hfill
    \begin{subfigure}[b]{0.45\textwidth}
        \centering
        \includegraphics[width=.6\linewidth]{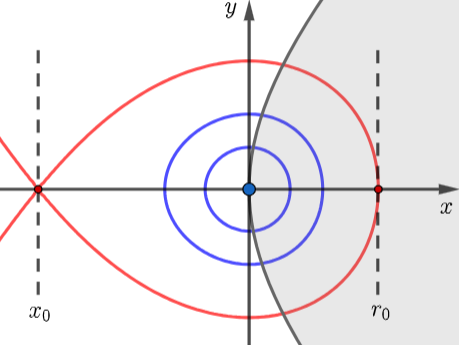}
        \caption{}
        \label{Fig4d}
    \end{subfigure}

    \caption{Analysis of the sign of \(N(x,y)\) in the region 
    \(\{(x,y) : 0 \leq H(x,y) \leq F(x_c)\}\).}
\end{figure}
\end{remark}

In~\cite{freire2004first}, Fig.~2, the authors presented the plot of the period function associated with the Hamiltonian system
\[
H(x,y) = k\left(\frac{x^2}{2} + \frac{x^3}{3}\right) + \frac{y^2}{2} + \frac{y^4}{4},
\]
for different values of \(k\): \(k = 1, 1.17525, 1.5, 2, 5, 10\). 
It was observed that, for \(k = 1\), the period function is increasing. 
When \(k \approx 1.17525\), it exhibits a minimum (almost imperceptible at the scale of the figure, so that the center appears to be isochronous), 
and it becomes decreasing for larger values of \(k\), such as \(k = 5\) and \(k = 10\).

In what follows, we apply the result established for Case~(C) to carry out an analytical study of this example.

After applying the change of variables~\eqref{nor} to the Hamiltonian system defined by \(H\), we obtain the normalized system
\[
x' = y(1 + k y^2), \qquad
y' = -x(1 + x),
\]
which corresponds precisely to case~(C), with \(c=k\).
In this case, \(x_0 = -1\) and \(F(x_0) = \tfrac{1}{6}\).
Solving \(F(r_0) = F(-1) = \tfrac{1}{6}\), we find \(r_0 = \tfrac{1}{2}\).

Using this value, and recalling that
\[
A(x) = 10 + 4x, \qquad
B(x) = (6 + 4x)(1+x)^2(1+2x),
\]
we compute
\[
k_0 = \frac{2A(1/2)}{3B(1/2)} = \frac{2}{9}.
\]
Therefore, according to Lemma~(Case~C), the period function \(T(E)\) is \emph{increasing} on \((0, 1/6)\) for all \(0 < k \leq 2/9\).

Furthermore, since
\[
k_1 = \frac{2A(0)}{3B(0)} = \frac{10}{9},
\]
it follows from Remark~\ref{rmk:cases}(1) that, for each \(k_0 < k < k_1\), there exists \(0 < x_k < \tfrac{1}{2}\) such that \(T(E)\) is increasing on \((0, F(x_k))\).
For instance, for \(k = 1\) we find \(x_1 \approx 0.025147\) and \(F(0.025147) \approx 0.000321\).
Consequently, \(T(E)\) is increasing on \((0, 0.000321)\).
Numerical evidence suggests that \(T(E)\) remains increasing for \(0.000321 < E < 1/6\).

For the limiting case \(k = 10/9\), our analytical result does not determine the monotonicity of \(T(E)\).
However, numerical computations indicate that \(T(E)\) is nearly constant for small energies, suggesting an almost isochronous behavior near the origin.

Finally, for \(k > 10/9\), the period function \(T(E)\) becomes \emph{decreasing} in the interval
\[
0 < E \leq F(x_k) < \tfrac{1}{6}.
\]
In this situation, the period function likely possesses a critical point between \(F(x_k)\) and \(F(1/2) = \tfrac{1}{6}\).
For example, for \(k = 2\) we find \(x_2 \approx -0.12449\) and \(F(-0.12449) \approx 0.007106\), implying that \(T(E)\) is decreasing on \((0, 0.007106)\).
Numerical graphs show the occurrence of a minimum at approximately \(E^* \approx 0.0427\).
A similar behavior is observed for \(k = 10\) and for larger values of \(k\).

These analytical predictions are in complete qualitative agreement with the numerical observations reported in~\cite{freire2004first}.

In the cases where \(\Delta < 0\), the system possesses an unbounded global center at \((0,0)\). We have the following results.

\begin{lemma}[Case E]\label{Lem_E}
Suppose \(\Delta < 0\), \(a = 0\), and \(c \geq 0\). The following statements hold:
\begin{itemize}
    \item[(i)] If \(c = 0\), then
    \[
    N(x,y) \leq 0 \quad \text{in } \Omega(E_0) = \{(x,y) \in \mathbb{R}^2 : 0 \leq H(x,y) \leq E_0\},
    \]
    where \(E_0 = F(x_0)\) and \(x_0 = \sqrt{\tfrac{\sqrt{5} - 2}{2}}\).
    
    \item[(ii)] Suppose \(c > 0\). Let \(x_m > x_0\) be the point of absolute maximum of the function
    \[
    f(x) = \frac{A(x)}{B(x)},
    \]
    and define \(y_m = y_m(c)\) by the condition
    \[
    G(y_m) = F(x_m) := E_0.
    \]
    Then
    \[
    N(x,y) \leq 0 \quad \text{in } \Omega(E_0) = \{(x,y) \in \mathbb{R}^2 : 0 \leq H(x,y) \leq E_0\}
    \]
   for every \(c \geq c_0\), where \(c_0 > 0\) is uniquely determined by the tangency condition (see Fig.~\ref{Fig5})
\[
\frac{A(x_m)}{B(x_m)} = \frac{Q(y_m)}{P(y_m)}.
\]
\end{itemize}
\end{lemma}

\begin{figure}[hbt!]
    \centering
    \includegraphics[width=0.35\linewidth]{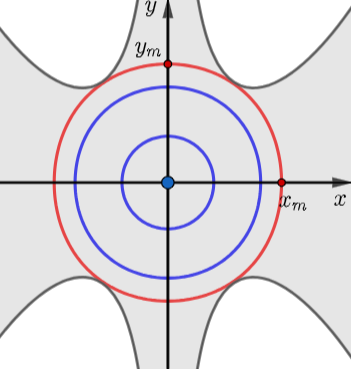}
    \caption{Region \(\Omega(E_0)\) corresponding to Case~E, item~(ii).}
    \label{Fig5}
\end{figure}

\begin{proof}
We work under the hypotheses of the lemma: \(a = 0\), \(b > 0\) (hence \(\Delta = a^2 - 4b < 0\)), and \(c \geq 0\). 
Note that, for \(a = 0\), we have
\[
A(x) = 9b^3 x^4 + 36b^2 x^2 - 9b = 9b\big(b^2x^4 + 4bx^2 - 1\big),
\]
and
\[
B(x) = (6 + 3bx^2)(1 + bx^2)^2(1 + 3bx^2) > 0, \quad \forall\, x \in \mathbb{R}.
\]

\noindent\textbf{(i)} If \(c = 0\), then \(P(y) = 4\) and \(Q(y) = 0\). Hence,
\[
A(x)P(y) - B(x)Q(y) = 4A(x).
\]
Since the real zeros of \(A(x)\) are \(x = \pm x_0\) and \(b > 0\), it follows that \(A(x) \leq 0\) for all \(x \in [-x_0, x_0]\). 
Consequently, \(N(x,y) \leq 0\) in \(\{(x,y) : 0 \leq H(x,y) \leq F(x_0)\}\), which proves (i).

\smallskip

\noindent\textbf{(ii)} Now suppose \(c > 0\). 
Since both \(B(x)\) and \(P(y)\) are positive functions, we can write
\[
N(x,y) = B(x)P(y)\!\left(\frac{A(x)}{B(x)} - \frac{Q(y)}{P(y)}\right) \leq 0
\quad \Longleftrightarrow \quad
\frac{A(x)}{B(x)} - \frac{Q(y)}{P(y)} \leq 0.
\]

Let \(f(x) = \tfrac{A(x)}{B(x)}\). 
Then \(f(x_0) = 0\), \(f'(x_0) = A'(x_0)B(x_0) > 0\), and \(f(x) \to 0\) as \(x \to +\infty\).
Hence, \(f\) attains an absolute maximum at some \(x_m \in (x_0, +\infty)\), with \(f(x_m) > 0\). 

Let \(y_m = y_m(c)\) be the positive solution of \(F(x_m) = G(y_m)\), and define
\[
\varphi(c) = \min_{|y| \leq y_m(c)} \frac{Q(y)}{P(y)}.
\]
The map \(c \mapsto \varphi(c)\) is continuous and positive, since both \(P\) and \(Q\) are positive for \(c > 0\). 
By the Intermediate Value Theorem, there exists \(c_0 > 0\) such that \(\varphi(c_0) = f(x_m)\). 
Therefore,
\[
\frac{A(x_m)}{B(x_m)} = \frac{Q(y_m(c_0))}{P(y_m(c_0))}.
\]

For all \(c \geq c_0\), we have \(\varphi(c) \geq f(x_m)\), and consequently
\[
\frac{A(x)}{B(x)} \leq f(x_m) \leq \varphi(c) \leq \frac{Q(y)}{P(y)},
\]
for all \((x,y)\) such that \(H(x,y) \leq F(x_m)\).

Therefore, \(N(x,y) \leq 0\) throughout the region \(\{0 \leq H(x,y) \leq F(x_m)\}\), which completes the proof.
\end{proof}

\begin{lemma}(Case F)
Suppose \(a \neq 0\), \(b > \dfrac{a^2}{4}\), and \(c = 0\). Consider the polynomial
\[
A(x) = 10a^2 - 9b + a(30b + 4a^2)x + b(36b + 16a^2)x^2 + 24ab^2x^3 + 9b^3x^4.
\]
Then the following statements hold:

\medskip

\noindent\textbf{(i)} If \(a > 0\) and \(\dfrac{a^2}{4} < b < \dfrac{10}{9}a^2\), then \(A(x)\) has two negative real roots \(x_2 < x_1 < 0\). 
In this case, \(A(x) \geq 0\) on \([x_1, 0]\), and consequently,
\[
N(x,y) \geq 0 \quad \text{in} \quad \{(x,y) : 0 \leq H(x,y) \leq F(x_1)\}.
\]

\smallskip

\noindent\textbf{(ii)} If \(a < 0\) and \(\dfrac{a^2}{4} < b < \dfrac{10}{9}a^2\), then \(A(x)\) has two positive real roots \(0 < x_1 < x_2\). 
In this case, \(A(x) \geq 0\) on \([0, x_1]\), and consequently,
\[
N(x,y) \geq 0 \quad \text{in} \quad \{(x,y) : 0 \leq H(x,y) \leq F(x_1)\}.
\]

\smallskip

\noindent\textbf{(iii)} If \(a \neq 0\) and \(b > \dfrac{10}{9}a^2\), then \(A(x)\) has two real roots \(x_2 < 0 < x_1\). 
In this case, \(A(x) \leq 0\) on \([x_2, x_1]\), and consequently,
\[
N(x,y) \leq 0 \quad \text{in} \quad \{(x,y) : 0 \leq H(x,y) \leq F(x_1)\}.
\]

\smallskip

\noindent\textbf{(iv)} If \(a > 0\) and \(b = \dfrac{10}{9}a^2\), then \(A(0) = 0\), with \(A(x) < 0\) for \(x < 0\) and \(A(x) > 0\) for \(x > 0\). 
Thus, the sign of \(N(x,y)\) is not well-defined in a neighborhood of \((0,0)\).
\end{lemma}

\begin{proof}
For \(c = 0\), we have \(P(y) = 2\) and \(Q(y) = 0\). Therefore,
\[
A(x)P(y) - B(x)Q(y) = 4A(x).
\]
Thus, for \((x,y) \neq (0,0)\), the sign of \(N(x,y)\) coincides with the sign of \(A(x)\). Hence, it suffices to study the sign of the polynomial
\[
A(x) = 10a^2 - 9b + a(30b + 4a^2)x + b(36b + 16a^2)x^2 + 24ab^2x^3 + 9b^3x^4.
\]

Note that the leading coefficient is \(9b^3 > 0\); therefore,
\(\lim_{x \to \pm\infty} A(x) = +\infty\).
Hence, \(A(x)\) takes positive values at both ends and can only change sign an even number of times. We also have
\[
A(0) = 10a^2 - 9b, \qquad A'(0) = a(30b + 4a^2).
\]

\medskip
\noindent\textbf{Case (i).} If \(a > 0\) and \(\dfrac{a^2}{4} < b < \dfrac{10}{9}a^2\), then \(A(0) > 0\) and \(A'(0) > 0\). By the behavior at infinity and continuity, \(A(x)\) must possess the necessary critical points to guarantee two intersections with the \(x\)-axis to the left of the origin. Consequently, there exist exactly two negative real roots \(x_2 < x_1 < 0\). Therefore, \(A(x) \geq 0\) on \([x_1, 0]\).

\medskip
\noindent\textbf{Case (ii).} If \(a < 0\) and \(\dfrac{a^2}{4} < b < \dfrac{10}{9}a^2\), the same reasoning applies with the orientation of \(x\) reversed, yielding two positive real roots \(0 < x_1 < x_2\). Therefore, \(A(x) \geq 0\) on \([0, x_1]\).

\medskip
\noindent\textbf{Case (iii).} If \(b > \dfrac{10}{9}a^2\), then \(A(0) = 10a^2 - 9b < 0\). Since \(A(x) \to +\infty\) as \(x \to \pm\infty\), continuity ensures the existence of one root to the left and another to the right of the origin, that is, \(x_2 < 0 < x_1\). Thus, \(A(x) \leq 0\) on \([x_2, x_1]\).

\medskip
\noindent\textbf{Critical case.} If \(b = \dfrac{10}{9}a^2\), then \(A(0) = 0\). Since \(A'(0) = a(30b + 4a^2)\) has the same sign as \(a\), it follows that \(A(x) < 0\) for \(x < 0\) near the origin and \(A(x) > 0\) for \(x > 0\) near the origin. Hence, the sign of \(N(x,y)\) changes arbitrarily close to \((0,0)\).
\end{proof}

\begin{lemma}(Case G)
Suppose \(a \neq 0\), \(b > \dfrac{a^2}{4}\), and \(c > 0\). Then the following statements hold.

\medskip
\noindent
\textbf{(i)} Suppose that \( \dfrac{a^2}{4} < b \le \dfrac{a^2}{3}\). 
In this case, \(A(x)\) has two distinct real roots (\(x_2 < x_1 < 0\) if \(a > 0\), and \(0 < x_1 < x_2\) if \(a < 0\)). 
Let \(x_0 \neq x_1\) be such that \(F(x_0) = F(x_1)\). 
Then \(N(x,y) \geq 0\) in 
\[
\{(x,y) : 0 \le H(x,y) \le F(x_1)\}
\]
for all \(0 < c \le c_0 = \dfrac{2A(x_0)}{3B(x_0)}.\)

\medskip
\noindent
\textbf{(ii)} Suppose \(b > \dfrac{a^2}{3}\) and \(c > 0\).  
Let \(x_m\) be the maximum point of the function
\[
f(x) = \frac{A(x)}{B(x)},
\]
and define \(y_m = y_m(c)\) implicitly by \(G(y_m) = F(x_m)\). 
Then
\[
N(x,y) \le 0 \quad \text{in} \quad \{(x,y) : 0 \le H(x,y) \le F(x_m)\},
\]
for all \(c \geq c_0\), where \(c_0 > 0\) is determined by the tangency condition
\[
\frac{A(x_m)}{B(x_m)} = \frac{Q(y_m)}{P(y_m)}.
\]
\end{lemma}

\begin{proof}
The proof of item~(ii) follows analogously to the proof of item~(ii) in Lemma~\ref{Lem_E}.

\smallskip

For \( \dfrac{a^2}{4} < b \le \dfrac{a^2}{3}\), the function \(A(x)\) satisfies \(A(x_1) = 0\) and \(A(x) > 0\) on \([x_0, x_1)\). 
Since \(Q(y) > 0\) for all \(c > 0\), we have
\[
N(x,y) = A(x)Q(y)\!\left(\frac{P(y)}{Q(y)} - \frac{B(x)}{A(x)}\right) \ge 0
\;\Longleftrightarrow\;
\left(\frac{P(y)}{Q(y)} - \frac{B(x)}{A(x)}\right) \ge 0.
\]

The function \(\dfrac{B(x)}{A(x)}\) is decreasing on \([x_0, x_1)\) and attains its absolute maximum at \(x = x_0\). 
On the other hand, \(\dfrac{P(y)}{Q(y)}\) attains its absolute minimum at \(y = 0\). 
Therefore,
\[
N(x,y) \ge 0 
\;\Longleftrightarrow\;
\left(\frac{P(0)}{Q(0)} - \frac{B(x_0)}{A(x_0)}\right) \ge 0
\;\Longleftrightarrow\;
0 < c \le \frac{2A(x_0)}{3B(x_0)}.
\]
\end{proof}

This completes the proof of Theorem~\ref{th_2}.

\end{proof}

\begin{remark}\label{rmk:caseG_ab3}
In the cases where $\Delta < 0$, situations analogous to those described in 
Remarks~\ref{rmk:cases} may also occur. 
Consider, in particular, the case $a = b = 3$, which falls under item~(i) of Case~G. 

Computing the roots of $A(x)$ yields $x_1 = -\tfrac{1}{3}$. 
Since $F(x_0) = F(x_1) = \tfrac{1}{36}$, we obtain $x_0 \approx 0.1958$. 
Hence,
\[
c_0 = \frac{2A(x_0)}{3B(x_0)} \approx 1.798.
\]
It follows that the period function $T(E)$ is \emph{increasing} on the interval $(0,\, 1/36)$.

Numerical plots indicate the existence of at least one critical point for $E > 1/36$. 
For $c_0 < c < c_1 = \dfrac{2A(0)}{3B(0)} = 7$, the function $T(E)$ remains increasing 
on $(0,\, F(x_c))$, where $x_c$ is determined by the condition $\sigma(x_c) = 0$. 
For $c > 7$, $T(E)$ becomes \emph{decreasing} on $(0,\, F(x_c))$ and may exhibit 
critical points for $E > F(x_c)$. 
This behavior can be observed numerically in the case $a = b = 3$ and $c = 8$.

Let us consider the case $a = b = 2$, which falls under item~(ii) of Case~G. 
We have $x_m \approx -0.0748706245$, $E_0 = F(x_m) \approx 0.0025387196$, 
and $c_0 \approx 2.8004647$. 
Therefore, the period function $T(E)$ is \emph{decreasing} on $(0,\, E_0)$ for all $c \ge c_0$. 
It is not excluded that $T(E)$ remains decreasing for $E > E_0$, 
as can be observed in the case $a = b = 2$ and $c = 3$. 
For $0 < c < c_0$, $T(E)$ may be \emph{increasing} for sufficiently small values of $E$, 
which occurs, for instance, when $a = b = 2$ and $c = 1$.

It is important to note that Theorem~\ref{th_2} does not exhaust all possible monotonicity behaviors of the period function \(T(E)\) for the system under consideration. 
For instance, the case \(c < 0\) has not been discussed here; however, it can be analyzed without significant additional difficulty.

 In general, to investigate a specific case, it suffices to analyze the sign of \(N(x,y)\) within the region
\[
\Omega(E_0) = \{(x,y) \in \mathbb{R}^2 : 0 < H(x,y) \le E_0\}.
\]
Therefore, one needs to determine a critical value \(E_0\)
for which \(\Omega(E_0)\) is the largest domain where \(M(x,y)\) maintains a constant sign.
Specifically, \(E_0\) is chosen so that the energy level \(H(x,y) = E_0\) is tangent to the curve \(M(x,y) = 0\).

\end{remark}

We now present an example of a global unbounded center for which \(M(x,y)\) retains the same sign throughout the entire plane.

\begin{example}[Relativistic harmonic oscillator]
Consider the relativistic harmonic oscillator, whose Hamiltonian is given by
\[
K(q,p) = \gamma m c^2 + \frac{1}{2} k q^2,
\qquad 
\text{where} \quad 
\gamma = \sqrt{1 + \frac{p^2}{m^2 c^2}}.
\]
The first term represents the relativistic kinetic energy, while the second one 
corresponds to the potential energy of a linear restoring force.

Introducing the dimensionless variables
\[
x = \sqrt{\frac{k}{m c^2}}\,q, 
\qquad
y = \frac{p}{m c},
\qquad
H = \frac{K}{m c^2},
\]
the Hamiltonian takes the normalized form
\[
H(x,y) = \frac{1}{2}x^2 + \sqrt{1+y^2} - 1.
\]
Applying Hamilton’s equations, we obtain the system
\[
x' = \frac{y}{\sqrt{1+y^2}}, 
\qquad 
y' = -x,
\]
which corresponds to the classical form of the relativistic harmonic oscillator
(see \cite{babusci2013relativistic}).

In that work, the authors showed, by directly analyzing the expression of the period function \(T(E)\), 
that the oscillation period increases with the system’s energy, as expected from relativistic effects. 
In what follows, we rederive this result as a direct consequence of Theorem~\ref{th1}.

\begin{proposition}
Let \(T(E)\) denote the period function associated with the center at \((0,0)\).
Then \(T(E)\) is an increasing function on \((0,+\infty)\).
\end{proposition}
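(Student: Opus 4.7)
The plan is to apply Theorem~\ref{th1} directly by computing the function $M(x,y)$ associated with $F(x) = \tfrac{1}{2}x^2$ and $G(y) = \sqrt{1+y^2}-1$, and showing that $M(x,y) \geq 0$ on the entire plane. Since the period annulus is unbounded ($E^{*} = +\infty$), establishing $M \geq 0$ in $\mathbb{R}^{2}$ will yield monotonic increase of $T$ on $(0,+\infty)$.

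First I would exploit the very simple structure of $F$. A direct differentiation gives $F'(x) = x$, $F''(x) = 1$, and $F'''(x) = 0$, so the first bracket in the definition of $M$,
\[
6F(F'')^{2} - 3(F')^{2}F'' - 2F F' F''' = 6\cdot\tfrac{x^{2}}{2} - 3x^{2} - 0 = 0,
\]
vanishes identically. Consequently the expression for $M$ collapses to
\[
M(x,y) = F(x)(F'(x))^{2}F''(x)\bigl[(G'(y))^{2} - 2G(y)G''(y)\bigr] = \tfrac{x^{4}}{2}\bigl[(G'(y))^{2} - 2G(y)G''(y)\bigr],
\]
so everything reduces to checking that the bracket involving $G$ is nonnegative for all $y \in \mathbb{R}$.

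Next I would compute $G'(y) = y/\sqrt{1+y^{2}}$ and $G''(y) = (1+y^{2})^{-3/2}$. The key step is to simplify $(G')^{2} - 2GG''$ into a manifestly nonnegative form. Introducing the change of variable $u = \sqrt{1+y^{2}} \geq 1$ (so that $y^{2} = u^{2}-1$), I expect the expression to reduce to
\[
(G'(y))^{2} - 2G(y)G''(y) = \frac{u^{2}-1}{u^{2}} - \frac{2(u-1)}{u^{3}} = \frac{(u-1)(u^{2}+u-2)}{u^{3}} = \frac{(u-1)^{2}(u+2)}{u^{3}}.
\]
Since $u \geq 1$, every factor on the right-hand side is nonnegative, so the bracket is nonnegative for all $y$, with equality only at $y=0$.

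Combining these two observations yields $M(x,y) \geq 0$ throughout $\mathbb{R}^{2}$, and Theorem~\ref{th1} then forces $T'(E) \geq 0$ on $(0,+\infty)$, completing the proof. The only nontrivial step is the algebraic simplification of $(G')^{2} - 2GG''$; the substitution $u = \sqrt{1+y^{2}}$ turns the difference of two rational expressions into a single fraction whose numerator factors as $(u-1)^{2}(u+2)$, which is where the nonnegativity becomes transparent. Everything else is a routine derivative computation plus an invocation of the main theorem.
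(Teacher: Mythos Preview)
Your proof is correct and follows essentially the same approach as the paper: both reduce $M(x,y)$ to $\tfrac{x^{4}}{2}\bigl[(G')^{2}-2GG''\bigr]$, introduce the substitution $u=\sqrt{1+y^{2}}$ (the paper uses $s$), factor the resulting expression as $(u-1)^{2}(u+2)/u^{3}$, and then invoke Theorem~\ref{th1}. The only difference is cosmetic---you spell out explicitly why the first bracket $6F(F'')^{2}-3(F')^{2}F''-2FF'F'''$ vanishes, whereas the paper states the simplified form of $M$ directly.
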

\end{example}

\begin{proof}
A straightforward computation gives
\[
M(x,y) = \frac{x^4}{2}\,\big((G')^2 - 2G G''\big).
\]
Thus, it suffices to verify that \((G')^2 - 2G G'' \ge 0\) for all \(y\).

Using \(G(y) = \sqrt{1+y^2} - 1\) and setting \(s = \sqrt{1+y^2} \ge 1\), we obtain
\[
(G')^2 - 2G G'' = \frac{(s - 1)^2 (s + 2)}{s^3}.
\]
Since \((s - 1)^2 (s + 2) \ge 0\) for all \(s \ge 1\), with equality only at \(s = 1\)
(i.e., \(y = 0\)), it follows that \((G')^2 - 2G G'' \ge 0\) for all \(y \in \mathbb{R}\).
Hence, \(M(x,y) \ge 0\) for all \((x,y) \in \mathbb{R}^2\).

By Theorem~\ref{th1}, the period function \(T(E)\) is therefore increasing on \((0,+\infty)\).
\end{proof}

 The following two examples were suggested by Chicone in a private
communication, in which he outlined a simple ODE model for the motion of a
liquid slug in an oscillating heat pipe (OHP), as well as a natural
generalization of this model.

\begin{example}[Oscillating heat pipe model]

Consider a straight tube of total length $L>0$, hermetically sealed at both ends
and placed in a horizontal position. The tube is partially filled with a liquid slug
of length $\gamma>0$, which separates two vapor bubbles (plugs) located on the left
and on the right of the slug. Let $u(t)\in(0,L-\gamma)$ denote the position of the 
left endpoint of the slug. Then the lengths of the left and right vapor bubbles are
\[
\ell_L(t)=u(t), \qquad 
\ell_R(t)=L-(u(t)+\gamma).
\]

Each bubble is assumed to behave as an ideal gas with constant amounts of substance
$n_L,n_R>0$ and uniform temperature $T$. Hence the pressures inside the bubbles are
\[
P_L(t)=\frac{n_LRT}{u(t)}, 
\qquad 
P_R(t)=\frac{n_RRT}{L-(u(t)+\gamma)}.
\]
The liquid slug, of mass $m>0$, moves according to Newton's law
\[
m\ddot u(t)=P_L(t)-P_R(t).
\]

Introducing the positive parameters
\[
a=\frac{n_LRT}{m}, 
\qquad 
b=\frac{n_RRT}{m},
\]
the equation of motion becomes
\[
\ddot u=\frac{a}{u}-\frac{b}{L-(u+\gamma)}, 
\qquad 0<u<L-\gamma.
\]

This equation is equivalent to the Hamiltonian system
\[
u'=v, \qquad v'=-V'(u),
\]
with Hamiltonian function
\[
H(u,v)=\tfrac12 v^{2} + V(u), \qquad 
V(u) = -a\ln u - b\ln\!\big(L-(u+\gamma)\big).
\]
The potential $V$ diverges at the endpoints of the domain and has a unique
nondegenerate minimum at
\[
u_0=\frac{a\,(L-\gamma)}{a+b}.
\]

Introducing the translated coordinates
\[
x = u - u_0, \qquad y = v,
\]
the system becomes the Hamiltonian system
\[
x' = y, \qquad y' = -F'(x),
\]
where
\[
F(x)=V(x+u_0)-V(u_0)=a\ln\Big(\frac{u_0}{u_0+x}\Big)+b\ln\Big(\frac{L-\gamma-u_0}{L-\gamma-u_0-x}\Big)
\]
defined on the interval
\[
x \in \bigl(-u_0,\; L-\gamma-u_0\bigr),
\]
and the center is located at $(0,0)$.

\begin{proposition}
Let \(T(E)\) denote the period function associated with the center at \((0,0)\).
Then there exists \(x_0 \in (-u_0,\,L-\gamma-u_0)\) such that \(T(E)\) is strictly
decreasing on the interval \((0,F(x_0))\).
\end{proposition}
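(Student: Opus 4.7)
The plan is to apply Theorem~\ref{th1} with $G(y) = y^2/2$. In that case $(G')^2 - 2GG'' \equiv 0$ and the general auxiliary function reduces to $M(x,y) = \tfrac{1}{2}\,N(x)\,y^4$, where $N$ is Chicone's function
\[
N(x) = 6F(F'')^2 - 3(F')^2F'' - 2FF'F'''.
\]
Since $y^4 \ge 0$, the task reduces to producing a neighborhood $[x_-(E_0), x_+(E_0)]$ of the origin on which $N \le 0$, strictly for $x \neq 0$; Theorem~\ref{th1} (refined to the strict-inequality case by the usual argument that the relevant integrand is strictly negative on a set of positive measure) will then give the strict decrease of $T$ on $(0, E_0)$.

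First, I would Taylor-expand $N(x)$ at $x = 0$. Because $F(0) = F'(0) = 0$, the coefficients of $x^0, x^1, x^2, x^3$ in $N$ cancel identically, so the first possibly nonzero contribution is of order $x^4$. A careful collection of the $x^4$ terms coming from each of the three summands of $N$ gives
\[
N(x) = \frac{F''(0)\,\big[5F'''(0)^2 - 3F''(0)F^{(4)}(0)\big]}{12}\,x^4 + O(x^5),
\]
so the sign of $N$ near the origin is controlled by the single scalar $\Theta := 5F'''(0)^2 - 3F''(0)F^{(4)}(0)$.

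To evaluate $\Theta$ for this model, I would differentiate $F(x) = -a\ln(u_0+x) - b\ln(L-\gamma-u_0-x) + \mathrm{const}$ at $x = 0$ and use the equilibrium relations $u_0 = a(L-\gamma)/(a+b)$ and $L-\gamma-u_0 = b(L-\gamma)/(a+b)$ to obtain
\[
F''(0) = \frac{(a+b)^3}{ab(L-\gamma)^2},\qquad
F'''(0) = \frac{2(a+b)^4(a-b)}{a^2b^2(L-\gamma)^3},\qquad
F^{(4)}(0) = \frac{6(a+b)^4(a^3+b^3)}{a^3b^3(L-\gamma)^4}.
\]
Substituting these into $\Theta$ and simplifying (using $a^3+b^3 = (a+b)(a^2-ab+b^2)$) reduces $\Theta$ to a positive multiple of $a^2 - 11ab + b^2$. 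In the parameter regime where this quantity is negative (which holds whenever the ratio $a/b$ is not too extreme, and in particular in the symmetric case $a = b$), one obtains $\Theta < 0$, so $N(x) < 0$ on a punctured neighborhood of the origin. Choosing $x_0$ small enough that the interval $[x_-(F(x_0)), x_+(F(x_0))]$ sits inside that neighborhood forces $M(x,y) \le 0$ on $\Omega(F(x_0))$, and Theorem~\ref{th1} delivers the strict decrease of $T$ on $(0, F(x_0))$.

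The hard part is the fourth-order Taylor expansion: one must verify that the $x^2$ and $x^3$ contributions cancel exactly (which requires carrying the expansions of $F, F', F'', F'''$ to the appropriate order and tracking each of the three products $6F(F'')^2$, $3(F')^2F''$, $2FF'F'''$), and then produce the concrete factor $a^2 - 11ab + b^2$ after substituting the explicit derivatives of the logarithmic potential. Everything else, namely the identification $M = \tfrac{1}{2}N(x)y^4$, the computation of the derivatives of $F$, and the final invocation of Theorem~\ref{th1}, is essentially mechanical.
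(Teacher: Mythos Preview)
Your overall approach---reducing to Chicone's $N(x)$ via $G(y)=\tfrac12 y^2$ and analyzing its sign near the origin---matches the paper's. The paper writes out $N(x)$ explicitly, asserts without further computation that $N(x)<0$ on a punctured neighborhood of $0$, observes $N\to+\infty$ at the endpoints, and invokes Theorem~\ref{th1}. Your Taylor expansion is more explicit and your formula for the $x^4$ coefficient, namely $\tfrac{1}{12}F''(0)\bigl(5F'''(0)^2-3F''(0)F^{(4)}(0)\bigr)$, is correct, as is the reduction of $\Theta$ to a positive multiple of $a^2-11ab+b^2$.

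But this very calculation exposes a genuine gap. The proposition is stated for \emph{all} positive $a,b$, whereas your argument only goes through when $a^2-11ab+b^2<0$, i.e.\ when $a/b\in\bigl(\tfrac{11-\sqrt{117}}{2},\tfrac{11+\sqrt{117}}{2}\bigr)\approx(0.09,10.9)$. You say so yourself (``In the parameter regime where this quantity is negative\ldots''), but that leaves the extreme-ratio case unproved. Worse, for $a/b$ outside this interval your leading coefficient is \emph{positive}, so $N(x)>0$ on a punctured neighborhood of $0$; Chicone's criterion then forces $T'(E)>0$ for small $E$, i.e.\ $T$ is \emph{increasing} near $E=0$, contradicting the stated conclusion. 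Thus your computation does not merely leave a gap---it indicates that the proposition, as stated for arbitrary $a,b>0$, cannot be established by this route and is in fact suspect in that regime. The paper's proof simply asserts $N<0$ near the origin without checking the sign of this leading coefficient.
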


\begin{proof}
Since \(G(y)=\tfrac12 y^2\) and \(G'(y)=y\), we have  
\(M(x,y)=\tfrac12\,y^4\,N(x)\), where
\[
N(x)
=6F(x)\big(F''(x)\big)^2
 - 3\big(F'(x)\big)^2 F''(x)
 - 2F(x)F'(x)F'''(x).
\]
Thus the sign of \(M\) is entirely determined by the sign of \(N(x)\).
A direct computation yields
\[
\begin{aligned}
N(x)
&= 6\Bigg(
a\ln\Big(\frac{u_0}{u_0+x}\Big)
 + b\ln\Big(\frac{k}{k-x}\Big)
\Bigg)
\Bigg(
\frac{a}{(x+u_{0})^{2}}
+ \frac{b}{(k-x)^{2}}
\Bigg)^{2} \\[0.4em]
&\quad
- 3\Bigg(
\frac{b}{k-x}-\frac{a}{x+u_{0}}
\Bigg)^{2}
\Bigg(
\frac{a}{(x+u_{0})^{2}}
+ \frac{b}{(k-x)^{2}}
\Bigg) \\[0.4em]
&\quad
- 2\Bigg(
a\ln\Big(\frac{u_0}{u_0+x}\Big)
 + b\ln\Big(\frac{k}{k-x}\Big)
\Bigg)
\Bigg(
\frac{b}{k-x}-\frac{a}{x+u_{0}}
\Bigg)
\Bigg(
\frac{2b}{(k-x)^{3}}
 - \frac{2a}{(x+u_{0})^{3}}
\Bigg),
\end{aligned}
\]
where \(k=L-\gamma-u_0\).

From the explicit expression of \(N\) one shows that there exists \(r>0\) such that
\(N(x)<0\) for all \(x\in(-r,r)\setminus\{0\}\). Moreover,
\[
\lim_{x\to -u_0^+} N(x)=+\infty,
\qquad
\lim_{x\to (L-\gamma-u_0)^-} N(x)=+\infty,
\]
so that \(N(x)>0\) in a neighborhood of each endpoint of the interval
\((-u_0,\,L-\gamma-u_0)\).

By continuity of \(N\) there exist two points
\[
x_-<0<x_+,
\qquad x_\pm\in(-u_0,\,L-\gamma-u_0),
\]
such that \(N(x_\pm)=0\) and
\[
N(x)<0
\quad\text{for all }x\in(x_-,0)\cup(0,x_+).
\]
Let \(x_0\in\{x_-,x_+\}\) be chosen so that
\[
|x_0|=\min\{|x_-|,x_+\}.
\]
Then \(N(x)\le 0\) for all \(x\in[-|x_0|,|x_0|]\).

Therefore, by Theorem~\ref{th1}, the period function \(T(E)\) is strictly 
decreasing on the interval \((0,F(x_0))\).

\end{proof}

Although the result above does not allow us to conclude that \(T(E)\) is decreasing on the whole interval \((0,\infty)\), the physical nature of the model provides no indication that a change of monotonicity should occur.
\end{example}

\begin{example}[Generalized oscillating heat pipe model] Consider the generalized family
\[
\ddot u = \frac{a}{u^{m}} - \frac{b}{(L-(u+\gamma))^{n}}, 
\qquad 0<u<L-\gamma,
\]
where $a,b>0$, $L>0$, $\gamma>0$, and $m,n>0$ are fixed parameters. As before,
$u(t)$ denotes the position of the left endpoint of the liquid slug, so that the
lengths of the left and right vapor plugs are $u(t)$ and $L-(u(t)+\gamma)$,
respectively. The exponents $m$ and $n$ allow for power-law dependences of the
forces generated by the plugs.

This equation can be written in Hamiltonian form
\[
u' = v,\qquad v' = -V'(u),
\]
where
\[
V(u) = -\frac{a}{1-m}\,u^{1-m}
       - \frac{b}{1-n}\,(L-\gamma-u)^{1-n},\quad\text{if }\qquad m\neq 1 \quad\text{and}\quad n\neq 1;
\]

\[
V(u) = -a\ln u - \frac{b}{1-n}\,(L-\gamma-u)^{1-n}, \quad\text{if }\qquad m=1 \quad\text{and}\quad n\neq 1;
\]
\[
V(u) = -\frac{a}{1-m}\,u^{1-m} - b\ln(L-\gamma-u), \quad\text{if }\qquad m\neq 1 \quad\text{and}\quad n=1.
\]
If $m=n=1$ we recover the logarithmic potential considered in the previous example.

In all cases with $m,n>0$ we have
\[
V''(u) = a m\,u^{-m-1} + b n\,(L-\gamma-u)^{-n-1} > 0
\qquad\text{for }0<u<L-\gamma,
\]
so that $V$ is strictly convex on $(0,L-\gamma)$. Moreover,
\[
\lim_{u\to 0^+} V'(u) = -\infty,
\qquad
\lim_{u\to (L-\gamma)^-} V'(u) = +\infty,
\]
and there exists a unique $u_0\in(0,L-\gamma)$ such that $V'(u_0)=0$. Since
$V''(u_0)>0$, this point is a nondegenerate minimum of $V$ and the Hamiltonian
system has a unique center at $(u_0,0)$.

Introducing the translated coordinates
\[
x = u - u_0,\qquad y = v,\qquad k = L-\gamma-u_0,
\]
we obtain a Hamiltonian system of the form
\[
x' = y,\qquad y' = -F'(x),
\]
with
\[
F(x) = V(u_0+x) - V(u_0),
\quad x\in(-u_0,k),
\]
and center at $(0,0)$. 
\end{example}

As in the previous example, the function
\[
N(x)=6F(x)\big(F''(x)\big)^2
      -3\big(F'(x)\big)^2F''(x)
      -2F(x)F'(x)F'''(x)
\]
satisfies \(N(0)=0\), and there exists \(r>0\) such that  
\(N(x)<0\) for all \(x\in(-r,r)\setminus\{0\}\). Moreover,
\[
\lim_{x\to -u_0^+} N(x)=+\infty,
\qquad
\lim_{x\to (L-\gamma-u_0)^-} N(x)=+\infty,
\]
so that \(N(x)>0\) in a neighborhood of each endpoint of the interval
\((-u_0,\,L-\gamma-u_0)\). By continuity of \(N\), there exist two points
\(x_-<0<x_+,\quad x_\pm\in(-u_0,\,L-\gamma-u_0),\)
such that \(N(x_\pm)=0\), and \(N(x)<0\) for all \(x\in(x_-,0)\cup(0,x_+).\) Let \(x_0\in\{x_-,x_+\}\) be chosen so that \(
|x_0|=\min\{|x_-|,\,x_+\}.\) Then \(N(x)\le 0\) for all \(x\in[-|x_0|,\,|x_0|]\). Therefore, by Theorem~\ref{th1}, the period function \(T(E)\) is strictly
decreasing on the interval \((0,F(x_0))\). Hence the generalized model with exponents \(m,n>0\) exhibits the same
monotonicity properties as the original logarithmic case: the period function
does not undergo any change of monotonicity and remains strictly decreasing on
\((0,F(x_0))\).

We conclude this section by applying our theorem to two non-polynomial systems.

\begin{example}[A pair of pendulum--type equations coupled through their velocities]
Consider the planar system
\[x'=\sin y,\quad
y'=-\sin x,
\]
which may be interpreted as a symmetric variant of the harmonic pendulum.

The system is Hamiltonian with the separable Hamiltonian
\[
H(x,y)=F(x)+G(y),\qquad 
F(x)=1-\cos x,\qquad G(y)=1-\cos y.
\]

The origin is a nondegenerate center, and its period annulus is bounded by the
energy level \(E_0=2\), which corresponds to the separatrix connecting the
saddle points \((0,\pm\pi)\) and \((\pm\pi,0)\).

In what follows we analyze the monotonicity of the period function \(T(E)\)
associated with the center at the origin for energies \(E\in(0,2)\).

A direct (though lengthy) computation shows that \(M(x,y)\) can be written in
the factored form
\[
M(x,y)
= 4\,(1-\cos y)\,\sin^{4}\!\Bigl(\frac{x}{2}\Bigr)\,\sin^{2}\!\Bigl(\frac{y}{2}\Bigr)\,N(x,y),
\]
where
\[
N(x,y) = 5 + \cos(2x) - 2(\cos x-2)\cos y.
\]
Hence, the sign of \(M(x,y)\) in any region is entirely determined by the sign
of \(N(x,y)\) in that region. It is not difficult to verify that \(N(x,y)\ge 0\) on
\[
\Omega(2) = \{(x,y)\in\mathbb{R}^2 : 0 \le H(x,y) \le 2\},
\]
and consequently \(M(x,y)\ge 0\) on \(\Omega(2)\). Therefore, by
Theorem~\ref{th1}, the period function \(T(E)\) is monotonically increasing on
the interval \((0,2)\).
\end{example}

\begin{example}
Consider the system
\[x' = \sinh(y),\qquad y' = -\sinh(x).\]

This system is Hamiltonian with
\[
H(x,y)=\cosh(x)-1+\cosh(y)-1,
\]
and it possesses an unbounded global center at the origin.  
In what follows we show that the period function \(T(E)\) associated with this
center is decreasing on the interval \((0,\,4+4\sqrt{3}]\).

A direct calculation yields the factorization
\[
M(x,y)
= -(\cosh x - 1)^2 (\cosh y - 1)^2
\bigl(\cosh^{2}x - \cosh x\,\cosh y + 2\cosh y + 2\bigr).
\]
Defining \(a=\cosh x \ge 1\) and \(b=\cosh y \ge 1\), we obtain
\[
M(x,y)
= -(a-1)^2(b-1)^2 f(a,b),
\qquad
f(a,b)=a^{2}-ab+2b+2.
\]
Consequently, \(M\le 0\) if and only if \(f\ge 0\). Furthermore,
\[
H(x,y)=(a-1)+(b-1)=a+b-2.
\]
Thus, determining the largest \(E_0\) for which \(M\le 0\) on \(\Omega(E_0)\)
reduces to minimizing the sum \(a+b\) along the curve \(f(a,b)=0\). Solving \(f(a,b)=0\) for \(b\) gives
\[
b(a)=\frac{a^{2}+2}{a-2}, \qquad a>2.
\]
A direct computation shows that the function \(S(a)=a+b(a)\) attains its minimum at
\(
a_{\min}=2+\sqrt{3}.
\)
Consequently,
\[
E_0=S(a_{\min})-2=4+4\sqrt{3}.
\]

Therefore, \(M(x,y)\le 0\) for all \((x,y)\) satisfying \(0<H(x,y)\le E_0\), and by
Theorem~\ref{th1} the period function \(T(E)\) is strictly decreasing on the
interval \((0,E_0)\).

The theorem guarantees the monotonicity of \(T(E)\) up to the energy level
\(E_0\). For energies beyond \(E_0\), the derivative \(T'(E)\) may or may not
vanish, and a change of monotonicity cannot be ruled out.

\end{example}
\section{Conclusion}

During the development of this work, two papers were found that address the
monotonicity of the period function for more general planar systems by means of
normalizers (see~\cite{freire2004first, Sabatini2006}). In both references, the
authors apply their methods to discuss the monotonicity of the period function
for the system
\[
x' = G(y), \qquad y' = -F(x).
\]
Furthermore, in the recent paper \cite{villadelprat2020period},
Proposition~3.1 provides a one-dimensional monotonicity condition for separable
Hamiltonians, which is closely related to our Theorem~\ref{th1}.

Nevertheless, the present study remains relevant for three main reasons. First,
it provides a direct generalization of Chicone’s criterion, the most frequently
cited result on this topic. Second, both the analytical expression of \(M(x,y)\)
and its proof are original contributions of this work. Third, the examples
discussed here are quite different from those typically found in the
literature. Moreover, the proposed criterion is remarkably simple to apply,
making it a practical tool for investigating the monotonicity of the period
function in concrete Hamiltonian systems.

We conclude this section with two questions:

\begin{enumerate}

\item What is the explicit expression of the function \(M(x,y)\) in terms of
\(F(x)\) and \(G(y)\) when \(G\) is not even?

\item Under what conditions can a system of the form
\[
u' = P(u,v), \qquad v' = Q(u,v),
\]
with a nondegenerate center at \((0,0)\), be transformed into a Hamiltonian
system of the form
\[
x' = G'(y), \qquad y' = -F'(x),
\]
with \(G\) an even function?

\end{enumerate}

In \cite{nascimento2024analytic}, it was proven that if \(P\) and \(Q\) are
analytic functions in a neighborhood of \((0,0)\), and if \((0,0)\) is a
nondegenerate center, then there exists a change of variables that transforms
the original system into a Hamiltonian system of potential type,
\[
x' = y, \qquad y' = -F'(x).
\]
In other words, the problem can be solved locally.

In \cite{grotta2025global}, the authors considered a simplified version of this
problem in a global context. More precisely, they studied systems of the form
\[
P(u,v) = v, \qquad Q(u,v) = f(u,v^2),
\]
where \(f\) is analytic on \(\mathbb{R}^2\), and showed that if \((0,0)\) is a
nondegenerate center (and the only equilibrium), then there exists an analytic
change of variables that transforms the system into the Hamiltonian form
\[
x' = y, \qquad y' = -F'(x).
\]

\section*{Acknowledgements}

The author expresses his sincere gratitude to Professor Clodoaldo G. Ragazzo for his valuable teachings (to whom this work is dedicated on the occasion of his 60th birthday), and to Professor C. Chicone for suggesting the examples related to the oscillating heat pipe (OHP) model.

\bibliography{refs} 
\end{document}